\newcommand{\F}{F}
\newcommand{\maps}{\colon}
\newcommand{\unfold}{\mathsf{unfold}}
\newcommand{\op}{\mathrm{op}}
\newcommand{\namedset}[1]{\mathbb{#1}}
\newcommand{\N}{\namedset N}
\newcommand{\inc}{\mathsf{in}}
\newcommand{\outc}{\mathsf{out}}
\newcommand{\Kl}{\mathsf{Kl}}
\newcommand{\namedcat}[1]{\mathsf{#1}}
\newcommand{\C}{\namedcat{C}}
\newcommand{\D}{\namedcat{D}}
\newcommand{\Set}{\namedcat{Set}}
\newcommand{\Hom}{\mathrm{Hom}}
\newcommand{\Coeq}{\mathsf{Coeq}}
\newcommand{\Rel}{\mathsf{Rel}}
\newcommand{\colim}{\mathrm{colim}}
\newcommand{\id}{\text{id}}
\newcommand{\Hylo}[1][F]{\mathsf{CA}_{#1}}
\newcommand{\str}{\mathsf{cart}}
\newcommand{\FCoalg}[1][F]{\mathsf{Coalg}_{#1}}
\newcommand{\FAlg}[1][F]{\mathsf{Alg}_{#1}}
\newcommand{\nexttime}{\fullmoon}
\newcommand{\Ar}{\mathsf{ar}}
\newcommand{\MS}{\mathsf{MS}}
\newcommand{\SqMS}{\mathsf{SqMS}}
\newcommand*{\relrelbarsep}{.386ex}
\newcommand*{\relrelbar}{%
  \mathrel{%
    \mathpalette\@relrelbar\relrelbarsep
  }%
}
\newcommand*{\@relrelbar}[2]{%
  \raise#2\hbox to 0pt{$\m@th#1\relbar$\hss}%
  \lower#2\hbox{$\m@th#1\relbar$}%
}
\providecommand*{\rightrightarrowsfill@}{%
  \arrowfill@\relrelbar\relrelbar\rightrightarrows
}
\providecommand*{\leftleftarrowsfill@}{%
  \arrowfill@\leftleftarrows\relrelbar\relrelbar
}
\providecommand*{\xrightrightarrows}[2][]{%
  \ext@arrow 0359\rightrightarrowsfill@{#1}{#2}%
}
\providecommand*{\xleftleftarrows}[2][]{%
  \ext@arrow 3095\leftleftarrowsfill@{#1}{#2}%
}
\newenvironment{prfsketch}
  {\trivlist\PRstyle\item[]{\bfseries Proof [Sketch]:}\newline}{\QED\endtrivlist}
\def\squareforqed{\hbox{\rlap{$\sqcap$}$\sqcup$}}
\def\QED{\ifmmode\squareforqed\else{\unskip\nobreak\hfil
\penalty50\hskip1em\null\nobreak\hfil\squareforqed
\parfillskip=0pt\finalhyphendemerits=0\endgraf}\fi}
\definecolor{darkgreen}{rgb}{0,0.45,0}
\definecolor{myurlcolor}{rgb}{0,.45,.2}
\definecolor{mylinkcolor}{rgb}{.1,.1,.7}
\newcommand{\R}{\mathbb{R}}
\tikzset{pullback/.style={minimum size=1.2ex,path picture={
\draw[opacity=1,black, -] (-0.5ex,-0.5ex) -- (0.5ex,-0.5ex) -- (0.5ex,0.5ex);%
}}}
\definecolor{joecolor(x11)}{rgb}{0.0, 0.5, 0.5}
\definecolor{purple(x11)}{rgb}{0.8, 0, 0.8}
\begin{document}

\title{Relative fixed points of functors} 


\author{Ezra Schoen\corresponding \\
Strathclyde University, United Kingdom \\ ezra.schoen@strath.ac.uk 
\and 
 Jade Master\thanks{Leverhulme Trust Research Project Grant RPG-2020-232} \\ Strathclyde University, United Kingdom \\ jade.master@strath.ac.uk
 \and
 Clemens Kupke$^*$ \\
 Strathclyde University, United Kingdom \\ clemens.kupke@strath.ac.uk}


\runninghead{E.\ Schoen, J.\ Master, and C.\ Kupke}{Relative fixed points of functors} 


\maketitle

\begin{abstract}
We show how the relatively initial and relatively terminal fixed points studied by Ad\'amek et al.~for a well-behaved functor $F$ form a pair of adjoint functors between $\F$-coalgebras and $\F$-algebras. We use the language of locally presentable categories to find sufficient conditions for existence of this adjunction. We show that relative fixed points may be characterized as (co)equalizers of the free (co)monad on $F$. In particular, when $F$ is a polynomial functor on $\Set$ the relative fixed points are a quotient or subset of the free term algebra or the cofree term coalgebra. We give examples of the relative fixed points for polynomial functors and a presentation of the Sierpinski carpet as a relative fixed point. Lastly, we prove a general preservation result for relative fixed points. \end{abstract}

\begin{keywords} 
coalgebra, algebra, fixed points, coalgebra-to-algebra morphisms
\end{keywords} 

\section{Introduction}
Fixed points of functors are particularly relevant to the study of coalgebras. As in \cite{jacobs2017introduction,adamek2018fixed} these fixed points capture the ideas of induction and coinduction on coalgebras. The main focus of research into fixed points of functors has thus far been on either the least fixed point of a functor or the greatest fixed point of a functor. However, in general a functor has more fixed points than just these two. We call these additional fixed points ``relative fixed points", after the ``relatively terminal coalgebras" of \cite{adamek2012relatively} and the ``relatively initial algebras" of \cite{adamek2016howmany}. Other constructions yielding relative fixed points are the rational fixed points of Ad{\'a}mek, Milius, and Velebil \cite{adamek2006iterative} and the locally finite fixed points of Milius, Pattinson, and Wi{\ss}mann \cite{milius2016new}. The main contribution of this paper is a presentation of relative fixed points via a pair of adjoint functors
\begin{equation} \hfill \label{diag:adj}
\begin{tikzcd}
\FCoalg \ar[r,bend left,"\mu"] \ar[r,phantom,"\bot"]& \FAlg \ar[l,bend left,"\nu"]
\end{tikzcd}
\hfill \end{equation}
This adjunction reveals the deep connection between relative fixed points and coalgebra-to-algebra homomorphisms (abreviated as ca-morphism).
Algebras and coalgebras which have unique ca-mor\-phisms going into or out of them have been studied extensively in \cite{adamekLuckeMilius2007,capretta2009corecursive,capretta2006recursive,levy2015final,adamek2020wellfounded} due to their connection to (co)induction principles. However, the fixed points studied in this paper are universal with respect to ca-morphisms which may not be unique. In the context of functional programming \cite{meijer1991functional}, not-necessarily unique ca-morphisms are used as data structures for recursion schemes. In \cite{hauhs2015scientific}, the authors argue for the use of non-unique ca-morphisms as a framework for scientific modelling. In this paper, we study the classifying objects for ca-morphisms in the context of the adjunction they induce. 

The paper is organised as follows:
In Section \ref{sec:universal}, we will introduce relative fixed points for $F$-(co)algebras, and the adjunction~\eqref{diag:adj} in the full categorical case. After that we will provide sufficient conditions for the existence of the adjunction. In Section \ref{sec:constructions}, we will provide examples of these relative fixed points as well as an explicit characterization for polynomial functors. In Section \ref{sec:preservation}, we will discuss when the adjunction is preserved by a functor and give some important examples of this phenomenon. Finally, in Section~\ref{sec:conclusions} we will draw conclusions and point to ideas for future work.

We now finish the introduction with a brief discussion of relative fixed points for monotone functions to equip the reader with some intuitions before moving to the more general categorical setting that follows.

\subsubsection*{Warm-up: Relative fixed points of monotone functions}


Consider a monotone function $f: L \to L$ on a complete lattice $L$. It has a least and a greatest fixed point, which may be constructed by an `approximation process' which generalizes Kleene's fixed point theorem for continuous functions \cite{cousot1979constructive}. For example, let $f$ be the following monotone function on $([0,1],\leq)$ the interval of real numbers with the usual ordering: 
\begin{center}
 \includegraphics[scale=.33]{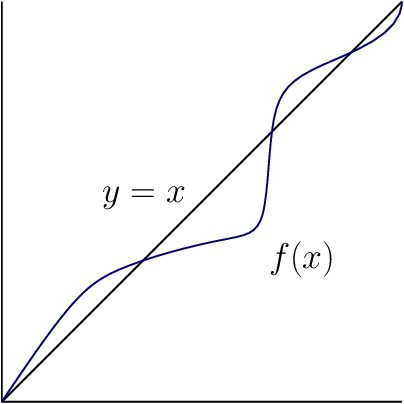}
 \end{center}
The function $f$ is overlayed with the function $y=x$. The intersection of the two curves indicate fixed points of $f$. The least fixed point of $f$ is $0$ and the greatest fixed point is $1$ but there are $3$ other fixed points in-between. These relative fixed points have a similar construction to the least and greatest ones. Given a ``post-fixed point" i.e. a point $x \in [0,1]$ such that $x \leq f(x)$ we may find the first fixed point above $x$ as
\[ \hfill\mu(x) = \sup \{x,f(x),f^2(x),f^3(x),\ldots\} \hfill \]
where the $\ldots$ indicate iteration to a sufficiently large ordinal. Similarly, given a ``pre-fixed point" $f(y) \leq y$, we may find the closest fixed point below $y$ as
\[\hfill\nu(y) = \inf \{ y ,f(y),f^2(y),f^3(y),\ldots\}\hfill\]  For a complete lattice $L$, let $Pre(f)$ be the suborder of $L$ consisting of only the post-fixed points $x \leq f(x)$. Similarly, let $Post(f)$ be the suborder of pre-fixed points $f(y) \leq y$. Then there is a Galois connection
\[ \hfill
\begin{tikzcd}
Post(f) \ar[r,bend left,"\mu_f"] \ar[r,phantom,"\bot"] & Pre(f) \ar[l,bend left,"\nu_f"]
\end{tikzcd}
\hfill \]
Being a Galois connection means that
\[ \hfill\mu(x) \leq y \iff x \leq \nu(y) \hfill \]
In this paper we will generalize this Galois connection to fixed points of functors rather than monotone functions. When generalizing from posets to categories we make the replacements shown in Figure \ref{table}. 

\begin{figure*}[h]
\begin{center}
\begin{tabular}{|c|c|}
\hline
    Poset & Category \\
    \hline
    Monotone Function $f$ & Functor $\F$ \\
    Post-fixed point of $f$ & $\F$-coalgebra \\
    Pre-fixed point of $f$ & $\F$-algebra \\ $\sup \{f(x),f^2(x),f^3(x), \ldots \}$ & $\colim(X \to F(X) \to F^2(X) \to F^3(X) \ldots)$ \\
    $\inf\{f(x),f^2(x),f^3(x), \ldots \}$ &  $\lim (X \leftarrow F(X) \leftarrow F^2(X) \leftarrow F^3(X) \ldots )$ \\
    Galois connection & Adjunction\\
    \hline
\end{tabular}
\end{center}
\vspace{-1ex}
\caption{Generalization from Posets to Categories}\label{table}
\end{figure*}

\paragraph{Acknowledgements.} For  insightful comments and questions, thank you to every member of the Mathematically Structured Programming group, Corina Cirstea, Toby Wilkinson, and Alexandre Goy.

\section{Relative Fixed Points are Adjoint}\label{sec:universal}

In this section, we recall the definition of `relatively terminal coalgebra' from \cite{adamek2012relatively} and the dual notion of `relatively initial algebra' from \cite{adamek2016howmany}. As is usual for definitions via universal properties, there may or may not be an object enjoying the property;  however, if there is one, it is unique up to unique isomorphism. 

\begin{definition}
   For an algebra $a$ and a coalgebra $b$, a coalgebra-to-algebra morphism from $a$ to $b$ (abbreviated as ca-morphism) is a morphism $f \maps B \to A$ making the following diagram commute:
    \[ \hfill
    \begin{tikzcd}
        FB\arrow[r, "Ff"] & FA\arrow[d, "a"]\\
        B\arrow[u, "b"] \arrow[r, "f"] & A
    \end{tikzcd}
    \hfill \]
Let $\Hylo(b,a)$ denote the set of coalgebra-to-algebra morphisms from $b$ to $a$.
\end{definition}

Note that $\Hylo$ constitutes the object part of a functor $\FCoalg^\op\times \FAlg\to \Set$ (i.e. a profunctor $\FCoalg \nrightarrow \FAlg$). Its action on morphisms is given via composition, as seen in the following diagram:
\[
\begin{tikzcd}
FB'\arrow[r, "Ff"] & FB\arrow[r, "Fg"] & FA\arrow[r, "Fh"] \arrow[d, "a"]& FA'\arrow[d, "a'"] \\
B'\arrow[u, "b'"] \arrow[r, "f"] & B\arrow[u, "b"] \arrow[r, "g"] & A\arrow[r, "h"] & A'
\end{tikzcd}
\]
That is, ca-morphisms can be precomposed with coalgebra morphisms, and postcomposed with algebra morphisms. We now define relative fixpoints as (co)representing objects for $\Hylo$:

\begin{definition}\label{universalprop}
Suppose we have an algebra $a : F A \to A$. A coalgebra $b:B\to FB$ is called \emph{terminal relative to $a$} if there is a natural isomorphism
\[
\hfill\phi\maps\FCoalg(-,b)\cong \Hylo(-, a)\hfill
\]
Similarly, for a coalgebra $b': B'\to FB'$, an algebra $a':FA'\to A'$ is called \emph{initial relative to $b'$} if there is a natural isomorphism
\[
\hfill\psi\maps\FAlg(a',-)\cong \Hylo(b', -)\hfill
\]
\end{definition}

By the Yoneda lemma, if an algebra $a$ admits a relatively terminal coalgebra, it must be unique up to unique isomorphism; hence, we may use the functional notation $\nu(a)$ or $\nu a$ to denote \emph{the} coalgebra which is terminal relative to $a$. Similarly, we will write $\mu(b)$ or $\mu b$ for \emph{the} algebra which is initial relative to $b$. 

However, note that so far we have no guarantee as to the existence of $\nu(a)$ and $\mu(b)$; we will adopt the convention that the use of an expression $\nu (a)$ or $\mu (b)$ carries with it the implicit assumption that such an object exists. So for example, Proposition~\ref{prop:fixed} should be read as ``For any algebra $a$, \emph{if a relatively terminal coalgebra exists}, it is a fixed point of $F$". In Theorem~\ref{thm:accessible-existence}, we will show that under appropriate conditions, $\mu$ and $\nu$ define total functors. 

As in the Yoneda lemma, of central importance are the maps $\eta = \psi(\id_{\mu (b)})$ and $\epsilon = \phi(\id_{\nu (a)})$. We have the following lemma:
\begin{lemma}
Let $b:B\to FB$ be an $F$-coalgebra, $a:FA\to A$ an $F$-algebra. Let $f:b\to \nu a$ be a coalgebra morphism, and $g:\mu b\to a$ an algebra morphism. Then we have the equalities
\begin{align}
\phi(f) = \epsilon\circ f\label{eq:phieps}\\
\psi(g) = g\circ\eta\label{eq:psieta}
\end{align}
\end{lemma}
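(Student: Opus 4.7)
The plan is to invoke the naturality of $\phi$ and $\psi$ at the morphisms $f$ and $g$ respectively, and chase the identity around the resulting naturality squares, exactly as in the proof of the Yoneda lemma. The key preliminary observation is the one made in the paragraph following the definition of $\Hylo$: the profunctorial action of $\Hylo$ is precomposition in its contravariant argument and postcomposition in its covariant argument, so the vertical arrows of the relevant naturality squares will be the obvious (co)restrictions of these composition operations.

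For equation~\eqref{eq:phieps}, naturality of the isomorphism $\phi \maps \FCoalg(-,\nu a) \cong \Hylo(-,a)$ at the coalgebra morphism $f \maps b \to \nu a$ yields the commutative square in $\Set$
\[
\begin{tikzcd}
\FCoalg(\nu a, \nu a) \ar[r, "\phi_{\nu a}"] \ar[d, "- \circ f"'] & \Hylo(\nu a, a) \ar[d, "- \circ f"] \\
\FCoalg(b, \nu a) \ar[r, "\phi_b"'] & \Hylo(b, a).
\end{tikzcd}
\]
Chasing $\id_{\nu a}$ down-then-right gives $\phi_b(f) = \phi(f)$, while chasing right-then-down gives $\epsilon \circ f$ by the definition of $\epsilon$; commutativity of the square yields the claimed equality.

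Equation~\eqref{eq:psieta} is obtained by an entirely dual argument: naturality of $\psi \maps \FAlg(\mu b, -) \cong \Hylo(b, -)$ at $g \maps \mu b \to a$ produces a commuting square whose vertical arrows are postcomposition with $g$, and chasing $\id_{\mu b}$ around it gives $\psi(g) = g \circ \eta$. Since the lemma is essentially a direct consequence of naturality together with the explicit description of the profunctorial action of $\Hylo$, I do not anticipate any significant obstacle; the only thing that needs care is correctly identifying the vertical arrows of each naturality square with the appropriate (pre- or post-) composition operation, which is handled by the preliminary observation above.
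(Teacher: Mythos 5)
Your proposal is correct and is essentially the same argument as the paper's: the paper also derives $\phi(f)=\epsilon\circ f$ from naturality of $\phi$ applied to $\id_{\nu a}$ (written as the chain $\epsilon\circ f = \phi(\id_{\nu a})\circ f = \phi(\id_{\nu a}\circ f)=\phi(f)$) and obtains \eqref{eq:psieta} by duality; your explicit naturality square is just a diagrammatic rendering of that same chase.
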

\begin{proof}
We prove \eqref{eq:phieps}, since \eqref{eq:psieta} follows by duality. We simply note that 
\[
\epsilon\circ f = \phi(\id_{\nu a})\circ f = \phi(\id_{\nu a}\circ f) = \phi(f)
\]
by naturality of $\phi$. 
\end{proof}

Perhaps surprisingly, the universal properties of $\mu (b)$ and $\nu (a)$ imply that they are always fixed points for $F$. 
\begin{proposition}\label{prop:fixed}
For any algebra $a$, the coalgebra $\nu (a):\nu A\to F\nu A$ is a fixed point of $F$. Similarly, for any coalgebra $b$, the algebra $\mu (b): F\mu (B)\to \mu(B)$ is a fixed point of $F$. 
\end{proposition}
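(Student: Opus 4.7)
My approach mirrors Lambek's lemma for terminal coalgebras, with the universal property of $\nu(a)$ playing the role that uniqueness of the anamorphism plays there. Writing $c = \nu(a)\maps \nu A\to F\nu A$, I would construct an explicit two-sided inverse $g\maps F\nu A\to \nu A$ using the counit $\epsilon = \phi(\id_{\nu a})$ provided by Definition~\ref{universalprop}.

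First I would apply $F$ to $c$ to obtain an $F$-coalgebra $(F\nu A, Fc)$ and exhibit $h := a\circ F\epsilon\maps F\nu A\to A$ as a ca-morphism from $(F\nu A, Fc)$ to $a$. The key calculation is
\[
a\circ Fh\circ Fc = a\circ Fa\circ F^2\epsilon\circ Fc = a\circ F(a\circ F\epsilon\circ c) = a\circ F\epsilon = h,
\]
which just applies $F$ to the ca-morphism equation $\epsilon = a\circ F\epsilon\circ c$ satisfied by $\epsilon$. The universal property of $\nu(a)$ then yields a unique coalgebra morphism $g = \phi^{-1}(h)\maps (F\nu A, Fc)\to (\nu A, c)$, and by~\eqref{eq:phieps} we have $\epsilon\circ g = a\circ F\epsilon$.

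It remains to show $g$ inverts $c$. The composite $g\circ c$ is a coalgebra endomorphism of $(\nu A, c)$, so I would compare its image under $\phi$ with that of the identity: using~\eqref{eq:phieps} and the ca-morphism property of $\epsilon$ once more, $\phi(g\circ c) = \epsilon\circ g\circ c = a\circ F\epsilon\circ c = \epsilon = \phi(\id_{\nu A})$, whence $g\circ c = \id_{\nu A}$ by bijectivity of $\phi$. The other identity is then automatic from the coalgebra condition on $g$: $c\circ g = Fg\circ Fc = F(g\circ c) = \id_{F\nu A}$. The claim about $\mu(b)$ follows by dualising every step: the relevant ca-morphism is $F\eta\circ b\maps B\to F\mu B$ into the algebra $(F\mu B, F\mu(b))$, and \eqref{eq:psieta} replaces \eqref{eq:phieps} throughout.

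The main obstacle is not any particular computation but identifying the right ca-morphism $a\circ F\epsilon$ to feed into the universal property; once one sees that this simulates the role played by the unique anamorphism in the classical Lambek argument, everything reduces to routine Yoneda-style bookkeeping.
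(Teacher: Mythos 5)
Your proof is correct and follows essentially the same Lambek-style argument as the paper, which proves the $\mu(b)$ case explicitly via the ca-morphism $F\eta\circ b$ into $(F\mu B,\,F\mu(b))$ and then appeals to injectivity of $\psi$ plus the algebra-morphism square — exactly the dual of your use of $a\circ F\epsilon$, injectivity of $\phi$, and the coalgebra-morphism square. The only difference is which half you prove directly and which you obtain by duality.
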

\begin{proof}This proposition resembles Lambek's lemma; indeed, it is possible to exhibit $\mu (b)$ as an initial algebra for a well-chosen functor $F_b:\namedcat{C}/B\to \namedcat{C}/B$. This is (up to duality) the approach taken in \cite{adamek2012relatively}. For concreteness, We have chosen to give an explicit proof. We prove that $\mu (b)$ is a fixed point; the case for $\nu (a)$ follows by duality. 

We wish to find an inverse $\beta$ to $\mu (b):F(\mu B)\to \mu B$. Since $F(\mu B)$ carries the algebra structure $F(\mu (b)):FF(\mu B)\to F(\mu B)$, it suffices to find a ca-morphism $b\to F(\mu (b))$. This is given by the following diagram:
\[
\begin{tikzcd}
FB\arrow[r, "Fb"] & FFB\arrow[r, "FF\eta"] & FF\mu B\arrow[d, "F\mu (b)"]\\
B\arrow[u, "b"]\arrow[r, "b"] & FB\arrow[r, "F\eta"]\arrow[u, "Fb"] & F\mu B
\end{tikzcd}
\]
This yields an algebra morphism $\beta:\mu B\to F\mu B$ such that 
\begin{equation}
    \beta\eta = \psi(\beta) = (F\eta) b \label{eq:beta}
\end{equation}
It remains to show that $\beta$ is a two-sided inverse to $\mu (b)$. Consider the composite $\mu (b)\circ \beta:\mu B\to \mu B$. We claim that under the correspondence $\psi$, this composite corresponds to $\eta$; since $\psi$ is bijective, and $\id_{\mu B}$ corresponds to $\eta$ by definition, this yields $\mu (b)\circ \beta = \id_{\mu B}$. To verify, we use equality \eqref{eq:psieta}:
\[
\begin{tikzcd}
\mu B\arrow[drr, "\beta", bend left]\\
&FB\arrow[r, "F\eta"]&F\mu B\arrow[d, "\mu (b)"]\\
B\arrow[uu, "\eta"]\arrow[ur, "b"]\arrow[rr, "\eta"]&&\mu B
\end{tikzcd}
\]
The top square is equation \eqref{eq:beta}, and the bottom square commutes since $\eta$ is a ca-morphism. 

We may now conclude that $\mu (b)\circ\beta = \id_{\mu B}$. To show that $\beta\circ\mu (b) = \id_{F\mu B}$, we simply note that $\beta$ is an algebra morphism, and hence
\[
\begin{tikzcd}
F\mu B\arrow[r, "F\beta"]\arrow[d, "\mu (b)"]& FF\mu B\arrow[d, "F\mu (b)"]\\
\mu B\arrow[r, "\beta"]&F\mu B
\end{tikzcd}
\]
commutes. The composite $F\mu (b)\circ F\beta = F(\mu (b)\circ \beta)$ is equal to $F\id_{\mu B} = \id_{F\mu B}$ as already shown, so we also have $\beta\circ\mu (b) = \id_{F\mu B}$. 
\end{proof}

We now give an adjunction characterizing relative fixed points.

\begin{theorem}\label{thm:adj}
Let $\C$ be a category, and $F:\C\to \C$ an endofunctor. Assume that every $F$-algebra has a relatively terminal coalgebra, and every $F$-coalgebra has a relatively initial algebra. Then $\nu:\FAlg\to \FCoalg$ and $\mu:\FCoalg\to \FAlg$ are the object parts of two adjoint functors
\[ \hfill
\begin{tikzcd}
\FCoalg \ar[r,bend left,"\mu"] \ar[r,phantom,"\bot"]& \FAlg \ar[l,bend left,"\nu"]
\end{tikzcd}
\hfill \]
\end{theorem}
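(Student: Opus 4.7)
The plan is to build the hom-set bijection for the adjunction directly out of the two representing isomorphisms from Definition \ref{universalprop}. Composing them yields, for each pair $(b,a)$, a bijection
\[
\Theta_{b,a} \maps \FAlg(\mu b, a) \xrightarrow{\psi_{b,a}} \Hylo(b, a) \xrightarrow{\phi^{-1}_{b,a}} \FCoalg(b, \nu a),
\]
and the proof reduces to (i) upgrading $\mu$ and $\nu$ to functors, and (ii) showing that $\Theta$ is natural in both variables.

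For the functorial action of $\nu$, given an algebra morphism $g \maps a \to a'$, the composite $g \circ \epsilon_a \maps \nu a \to a'$ is a ca-morphism via the profunctor structure on $\Hylo$, so the universal property of $\nu a'$ produces a unique coalgebra morphism $\nu g \maps \nu a \to \nu a'$ with $\phi_{\nu a, a'}(\nu g) = g \circ \epsilon_a$. By equation \eqref{eq:phieps}, this is exactly the equation $\epsilon_{a'} \circ \nu g = g \circ \epsilon_a$, which says that $\epsilon$ is natural in $a$. Functoriality of $\nu$ is then immediate from uniqueness: both $\id_{\nu a}$ and $\nu(\id_a)$ satisfy the defining equation for $\nu(\id_a)$, and $\nu g' \circ \nu g$ satisfies the defining equation for $\nu(g' \circ g)$. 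I would define $\mu$ dually, obtaining a natural family $\eta$ with $\mu f \circ \eta_{b'} = \eta_b \circ f$ for every coalgebra morphism $f \maps b' \to b$.

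For joint naturality of $\Theta$ one needs $\psi$ and $\phi$ to be natural in \emph{both} arguments rather than the single slot that was built into their definitions. Naturality of $\psi_{b,-}$ in $a$ and of $\phi_{-,a}$ in $b$ is immediate. Using equations \eqref{eq:psieta} and \eqref{eq:phieps}, the remaining naturalities reduce, respectively, to naturality of $\eta$ and of $\epsilon$, both of which are exactly what step (i) delivered. Putting these together, $\Theta$ is natural in $b$ and in $a$, so it exhibits $\mu$ as left adjoint to $\nu$.

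I expect the main subtlety to be the bookkeeping around these two kinds of naturality: the universal properties only supply naturality in one slot at a time, and the real work lies in recognising that $\eta$ and $\epsilon$ must play the roles of unit and counit, and that their naturality in the ``other'' variable is forced by the universal properties that define $\mu$ and $\nu$ on morphisms. Once this is in place, the argument is the standard ``parametrically (co)representable profunctor yields an adjunction'' fact.
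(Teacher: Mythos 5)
Your proposal is correct and follows essentially the same route as the paper: define $\mu$ and $\nu$ on morphisms via the universal properties (so that the defining equations are exactly the naturality of $\eta$ and $\epsilon$, with functoriality by uniqueness), then exhibit the adjunction as the composite bijection $\FAlg(\mu b,a)\cong\Hylo(b,a)\cong\FCoalg(b,\nu a)$, whose naturality in each variable reduces via equations \eqref{eq:psieta} and \eqref{eq:phieps} to those defining equations together with the naturality built into $\phi$ and $\psi$. The only cosmetic difference is that the paper spells out $\mu$ on morphisms and the naturality-in-$b$ computation explicitly and dualizes the rest, whereas you spell out the $\nu$/$\epsilon$ side.
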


\begin{proof}
For the action of $\mu$ on morphisms, consider a coalgebra morphism
\[
\hfill\begin{tikzcd}FB\arrow[r, "Ff"] &FB'\\ B\arrow[r, "f"]\arrow[u, "b"] & B'\arrow[u, "b'"]\end{tikzcd}\hfill
\]
Then we obtain a ca-morphism
\[
\hfill\begin{tikzcd}
FB\arrow[r, "Ff"] & FB'\arrow[r, "F\eta"] & F\mu B'\arrow[d, "\mu b'"]\\
B\arrow[u, "b"] \arrow[r, "f"] & B'\arrow[u, "b'"] \arrow[r, "\eta"] & \mu B'
\end{tikzcd}\hfill
\]
So, we can set $\mu(f)$ to be $\psi^{-1}(\eta\circ f)$. This preserves composition: note that $\mu(f)$ is the unique algebra morphism $g:\mu B\to \mu B'$ such that $g\circ \eta = \eta\circ f$. Now consider the following diagram:
\[
\begin{tikzcd}
B\arrow[d, "\eta"] \arrow[r, "f"] & B'\arrow[d, "\eta"] \arrow[r, "f'"] & B'' \arrow[d, "\eta"]\\
\mu B\arrow[r, "\mu(f)"] & \mu B'\arrow[r, "\mu(f)'"] & \mu B''
\end{tikzcd}
\]
We see that $\mu(f')\circ\mu(f)$ is an algebra morphism satisfying $\mu (f')\circ \mu (f) \circ \eta = \eta\circ (f'\circ f)$, and so by uniqueness we must have
\[
\mu (f'\circ f) = \mu (f')\circ \mu (f)
\]
The action of $\nu$ on morphisms is defined dually.

To see that $\mu \dashv \nu$, simply consider the composite isomorphism
\[
\FAlg(\mu (b), a) \overset{\psi}\cong \Hylo(b,a)\overset{\phi^{-1}}\cong \FCoalg(b, \nu (a))
\]
We'll show that this is natural in $b$ (naturality in $a$ follows similarly). That is, we need to show that if $g:\mu(b)\to a$ is an algebra morphism, and $f:b'\to b$ is a coalgebra morphism, then
\[
\phi^{-1}\psi(g\circ \mu(f)) = \phi^{-1}\psi(g)\circ f
\]
We have the following chain of equalities:
\begin{align*}
\phi^{-1}\psi(g\circ \mu(f)) &= \phi^{-1}(g\circ \mu(f)\circ \eta) & (\text{equation \ref{eq:psieta}})\\
&= \phi^{-1}(g\circ \eta\circ f) &(\text{definition of }\mu(f))\\
&= \phi^{-1}(g\circ \eta)\circ f & (\text{naturality of }\phi)\\
&= \phi^{-1}\psi(g) &(\text{equation \ref{eq:psieta}})
\end{align*}
\end{proof}

In order to apply Theorem \ref{thm:adj}, we need to know when its assumptions are satisfied. As exposited in the warmup section, an approximation strategy may be employed; however, since categories are usually proper classes, we need to impose some smallness conditions on the category and the functor. 
\begin{definition}
Let $\C$ be a category, and $F:\C\to\D$ a functor.
\begin{enumerate}[label = (\roman*)]
\item We call $\C$ \emph{$\lambda$-chain-complete} if it has limits of diagrams of shape $(\alpha, \geq)$ for all ordinals $\alpha \leq \lambda$.
\item We call $\C$ \emph{$\lambda$-chain-cocomplete} if it has colimits of diagrams of shape $(\alpha,\leq)$ for all ordinals $\alpha \leq \lambda$. 
\item We call $F$ \emph{$\lambda$-continuous} if it preserves limits of diagrams of shape $(\lambda, \geq)$.
\item We call $F$ \emph{$\lambda$-cocontinuous} if it preserves colimits of diagrams of shape $(\lambda,\leq)$.
\end{enumerate}
\end{definition}
With this terminology, we have the following proposition, which is a minor variation on existing results:
\begin{proposition}\label{prop:iterfix}
Let $\C$ be a category, $F:\C\to \C$ an endofunctor, and $\lambda$ a regular (infinite) cardinal.
\begin{enumerate}[label = (\roman*)]
\item If $\C$ is $\lambda$-chain-cocomplete, and $F$ is $\lambda$-cocontinuous, then each $F$-coalgebra admits a relatively initial algebra.
\item If $\C$ is $\lambda$-chain-complete, and $F$ is $\lambda$-continuous, then each $F$-algebra admits a relatively terminal coalgebra.
\end{enumerate}
\end{proposition}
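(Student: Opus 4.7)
The plan is to adapt the standard Adámek initial-algebra-chain construction to the relative setting. I sketch (i); part (ii) then follows by dualising in $\C^{\op}$.

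Given a coalgebra $b: B \to FB$, I would build a transfinite chain $X: (\lambda+1,\leq) \to \C$ by transfinite recursion: set $X_0 = B$, take the initial connecting map $x_{0,1}: X_0 \to X_1 = FB$ to be $b$ itself, propagate at successors by $X_{\alpha+1} = FX_\alpha$ with $x_{\alpha+1,\alpha+2} = F(x_{\alpha,\alpha+1})$, and at limit ordinals $\alpha$ let $X_\alpha = \colim_{\beta<\alpha} X_\beta$ with connecting maps given by the coprojections $\iota_\beta^\alpha$. The only subtle step is the connecting map $x_{\alpha,\alpha+1}: X_\alpha \to FX_\alpha$ at a limit $\alpha$, which is induced by the cocone $F(\iota_\beta^\alpha)\circ x_{\beta,\beta+1}: X_\beta \to FX_\alpha$; a short diagram chase using functoriality of the chain shows this is indeed a cocone.

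Now $\lambda$-cocontinuity of $F$ gives $FX_\lambda \cong \colim_{\beta<\lambda} FX_\beta = \colim_{\beta<\lambda} X_{\beta+1}$, and the shifted chain $(X_{\beta+1})_{\beta<\lambda}$ is cofinal in $(X_\alpha)_{\alpha<\lambda}$, so its colimit is canonically isomorphic to $X_\lambda$. Inverting this composite isomorphism yields the algebra structure $\mu b: FX_\lambda \to X_\lambda$, characterised by $\mu b \circ F\iota_\beta = \iota_{\beta+1}$ for every $\beta < \lambda$ (where I now write $\iota_\alpha$ for the coprojection into $X_\lambda$). Taking the unit $\eta := \iota_0: B \to X_\lambda$, we see $\mu b\circ F\eta\circ b = \iota_1 \circ b = \iota_0 = \eta$ by the cocone condition and $b = x_{0,1}$, so $\eta$ is a ca-morphism.

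For the universal property, given an algebra $a: FA \to A$ and a ca-morphism $f: B \to A$, I would define $g_\alpha: X_\alpha \to A$ by transfinite recursion: $g_0 = f$, $g_{\alpha+1} = a \circ Fg_\alpha$, and at limit stages the unique map induced by the cocone $(g_\beta)_{\beta<\alpha}$. The base compatibility $g_1 \circ x_{0,1} = g_0$ is precisely the ca-morphism equation $a\circ Ff\circ b = f$; successor compatibilities follow by applying $F$ to the previous stage, and limit compatibilities hold by construction. Setting $g := g_\lambda$, a cocone chase on the coprojections $F\iota_\beta$ yields $g\circ \mu b = a\circ Fg$, making $g$ an algebra morphism with $g\circ\eta = f$. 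For uniqueness, any algebra morphism $h: X_\lambda \to A$ with $h\circ\eta = f$ satisfies $h\circ \iota_\alpha = g_\alpha$ by a straightforward transfinite induction (using the defining property of $\mu b$ at successor stages), whence $h = g$ by the universality of $X_\lambda$.

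The main obstacle is the careful transfinite bookkeeping at limit stages: verifying that the cocones used both in the chain construction and in the definition of $g$ are genuinely cocones and compatible with the (co)algebra structures. Once those diagram chases are in hand, the whole argument reduces to the two essential ingredients above --- $\lambda$-cocontinuity of $F$ plus cofinality of the shifted chain, and the ca-morphism identity as the base case of the recursion.
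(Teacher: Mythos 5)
Your argument is correct, and it arrives at exactly the object the paper identifies at the end of its sketch: $\mu b$ is the colimit of the chain $B \xrightarrow{b} FB \xrightarrow{Fb} FFB \to \cdots$ of length $\lambda$. The route, however, is genuinely different. The paper does not verify the universal property by hand; instead it invokes two known results: the Ad\'amek-style theorem that a $\lambda$-cocontinuous endofunctor on a $\lambda$-chain-cocomplete category has an initial algebra, and the (dual of a) lemma from Ad\'amek et al.\ identifying a relatively initial algebra for $b$ with an initial algebra of the functor $F_b \colon (B\setminus\C)\to(B\setminus\C)$, $(B\xrightarrow{x}X)\mapsto(B\xrightarrow{b}FB\xrightarrow{Fx}FX)$; one then only checks that the coslice inherits chain-cocompleteness (the codomain functor creates colimits) and that $F_b$ inherits $\lambda$-cocontinuity. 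Your approach re-proves the Ad\'amek construction directly in the relative setting: the transfinite chain, the algebra structure $\mu b\circ F\iota_\beta=\iota_{\beta+1}$ obtained from $\lambda$-cocontinuity plus cofinality of the shifted chain, the ca-morphism identity $a\circ Ff\circ b=f$ as the base case of the recursion defining $g_\alpha$, and uniqueness by transfinite induction over the coprojections. What the paper's reduction buys is brevity and a conceptual explanation of why Lambek-type and iteration results transfer (everything is literally an initial algebra in $B\setminus\C$); what your direct argument buys is self-containedness and an explicit handle on the unit $\eta=\iota_0$ and on the bijection $g\mapsto g\circ\eta$ realizing $\FAlg(\mu b,-)\cong\Hylo(b,-)$ (naturality of which is automatic, though worth stating). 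Your deferral of the limit-stage cocone verifications is acceptable at the level of detail of the paper's own proof sketch, and your treatment of (ii) by dualization matches the paper.
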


\begin{prfsketch}
We only prove (i), since (ii) follows by duality. Recall that cocontinuous endofunctors on $\lambda$-chain-cocomplete categories have initial algebras (Theorem 3.5 in \cite{adamek2018fixed}; note that there it is assumed that $\C$ has colimits of all chains, but clearly only chains of length at most $\lambda$ are needed). 

Now, dual to lemma 3.2 in \cite{adamek2012relatively}, a relatively initial algebra for $b:B\to FB$ is the same as an initial algebra for $F_b:(B\setminus \C)\to (B\setminus \C)$ defined as
\[
F_b:(B\overset{x}{\to}X) \mapsto (B\overset{b}{\to} FB\overset{Fx}{\to} FX)
\]
Since the `codomain functor' $(B\setminus \C)\to \C$ given by $(B\overset{x}\to X)\mapsto X$ creates colimits, we know that $(B\setminus \C)$ has all colimits that $\C$ has, and hence is also $\lambda$-chain-cocomplete. Moreover, $F_b$ preserves all colimits that are preserved by $F$, and hence is also $\lambda$-cocontinuous. We conclude that $F_b$ has an initial algebra, which is a relatively initial $F$-algebra for $b$. 

Unpacking the abstract theorems used yields the construction of $\mu b$ as the colimit of the $\lambda$-chain
\[
\begin{tikzcd}
B\arrow[r, "b"] & FB\arrow[r, "Fb"] & FFB\arrow[r, "FFb"] & \cdots &(< \lambda)
\end{tikzcd}
\]
\end{prfsketch}

Finally, if $\C$ is locally presentable and $F$ is locally accessible, then both $\mu$ and $\nu$ constitute total functors:
\begin{theorem}\label{thm:accessible-existence}
Let $\C$ be a locally ($\lambda$-)presentable category, and $F:\C\to\C$ a locally ($\lambda$-)accessible functor. Then each $F$-coalgebra has a relatively initial algebra, and each $F$-algebra has a relatively terminal coalgebra. 
\end{theorem}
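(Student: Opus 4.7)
The plan is to treat the two halves of the theorem separately.  For the relatively initial algebras I would simply invoke Proposition~\ref{prop:iterfix}(i): a locally $\lambda$-presentable category is in particular cocomplete and hence $\lambda$-chain-cocomplete, and since $\lambda$ is regular every $\lambda$-chain is a $\lambda$-filtered diagram, so the hypothesis that $F$ be locally $\lambda$-accessible implies $\lambda$-cocontinuity of $F$ along $\lambda$-chains.  Proposition~\ref{prop:iterfix}(i) therefore directly supplies $\mu b$ for every coalgebra $b$.

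The relatively terminal coalgebras are harder, because $\lambda$-accessibility of $F$ does \emph{not} in general entail preservation of the $\lambda^\op$-chain limits that Proposition~\ref{prop:iterfix}(ii) demands.  To circumvent this I would pass to a slice category: dualising Lemma~3.2 of~\cite{adamek2012relatively}, $\nu a$ may be identified with the terminal coalgebra of the endofunctor $F^a\maps \C/A\to\C/A$ sending $(X\overset{x}{\to} A)$ to $(FX\overset{a\circ Fx}{\to} A)$.  The slice $\C/A$ is again locally $\lambda$-presentable, and since $\lambda$-filtered colimits in $\C/A$ are created by the domain projection to $\C$, the functor $F^a$ inherits $\lambda$-accessibility from $F$.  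The problem thus reduces to showing that any locally $\lambda$-accessible endofunctor $G$ on a locally $\lambda$-presentable category admits a terminal $G$-coalgebra.

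The main obstacle is precisely this last reduction.  The naive terminal sequence $A\leftarrow FA\leftarrow F^2 A\leftarrow \cdots$ cannot be used in the absence of $\lambda$-continuity of $F$.  Instead I would cite the classical fact from the theory of accessible categories that $\FCoalg[G]$ is locally presentable whenever $G$ is accessible on a locally presentable base; this is typically established by realising $\FCoalg[G]$ as the inserter of the parallel pair $\id,G\maps\C\to\C$, and inserters built from accessible functors between locally presentable categories are themselves locally presentable, hence complete.  Any complete category has a terminal object; applying this to $G=F^a$ produces the terminal $F^a$-coalgebra and so delivers $\nu a$.
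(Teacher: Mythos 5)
Your proposal is correct, and the first half (existence of $\mu b$ via Proposition~\ref{prop:iterfix}(i) from cocompleteness of $\C$ and $\lambda$-cocontinuity of an accessible $F$ on $\lambda$-chains) is exactly the paper's argument. For the second half you diverge: the paper deliberately avoids any pointwise construction of $\nu a$ and instead applies the (dual) Special Adjoint Functor Theorem to the already-constructed total functor $\mu\maps\FCoalg\to\FAlg$, using that $\FCoalg$ and $\FAlg$ are locally presentable and co-wellpowered and that $\mu$ preserves colimits; the right adjoint $\nu$ so obtained is then checked to satisfy $\FCoalg(b,\nu a)\cong\FAlg(\mu b,a)\cong\Hylo(b,a)$. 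Your route instead identifies $\nu a$ with the terminal coalgebra of the sliced functor $F^a$ on $\C/A$ (this is essentially Lemma 3.2 of the Ad\'amek et al.\ paper as stated, so no dualisation is actually needed on this side), notes that $\C/A$ is again locally $\lambda$-presentable and $F^a$ again $\lambda$-accessible, and then uses local presentability (hence completeness) of the coalgebra category of an accessible endofunctor to extract a terminal object; all of these steps are sound, and the identification of $F^a$-coalgebras with pairs (coalgebra, ca-morphism into $a$) makes the terminal $F^a$-coalgebra exactly a representing object for $\Hylo(-,a)$. In effect you recover the original existence proof of Ad\'amek et al.\ (Corollary 5.1 of that paper), which the present paper explicitly cites before offering its SAFT proof as a novel alternative. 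The trade-off: your argument is objectwise and avoids co-wellpoweredness and the cocontinuity of $\mu$, but yields only existence (functoriality and the adjunction then come from Theorem~\ref{thm:adj}), whereas the paper's SAFT argument produces $\nu$ directly as a right adjoint to $\mu$ in one stroke — though, as the paper remarks, neither route describes $\nu a$ as a limit of a terminal cochain.
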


The existence of relatively terminal coalgebras is proved as Corollary 5.1 in \cite{adamek2012relatively}; we present a novel proof using the Special Adjoint Functor Theorem.

\begin{prfsketch}
If $\C$ is locally presentable, it is cocomplete, so in particular $\lambda$-chain-cocomplete. Additionally, if $F$ is $\lambda$-accessible, it is $\lambda$-cocontinuous. So, by Proposition \ref{prop:iterfix}, all $F$-coalgebras have relatively initial algebras. Hence, we obtain a total functor $\mu:\FCoalg \to \FAlg$. 

To show the existence of $\nu$ we use the (dual of) the special adjoint functor theorem (e.g. \cite[Thm.\ 4.58]{riehl2017category}). By \cite[Exercise 2j]{adamek1994locally}, $\FCoalg$ is locally presentable and by \cite[Corr.\ 2.75]{adamek1994locally} so is the category $\FAlg$. By \cite[Thm.\ 1.58]{adamek1994locally}, both these categories are co-wellpowered. The functor $\mu$ preserves colimits, because it is constructed as a colimit and colimits distribute over themselves. Therefore, by the special adjoint functor theorem, $\mu$ has right adjoint $\nu$; to see that that $\nu (a)$ is terminal relative to $a$, consider the natural equivalences
\[
\FCoalg(b, \nu (a)) \cong \FAlg(\mu (b) , a)\cong \Hylo(b,a).
\]
\end{prfsketch}

Note that in the above theorem, while $\mu$ is constructed as a colimit of an initial chain, $\nu$ is not constructed as a limit of a terminal cochain (not even implicitly, as the Adjoint Functor Theorem yields a \emph{colimit} construction rather than a limit); hence, we unfortunately do not settle open problem \cite[Open Problem 2.7]{adamek2012relatively}. 

We end this section with a few miscellaneous results. First, we can get a clearer idea of the monad and comonad induced by the adjunction.

\begin{remark}\label{rem:flip}
It is easy to see that if $b:B\to FB$ is an isomorphism, then $b^{-1}:FB\to B$ is initial relative to $b$; hence, $\mu (b) = b^{-1}$. Similarly, $\nu \alpha = \alpha^{-1}$ whenever $\alpha:FA\to A$ is an isomorphism. From this, it follows that the monad $\nu\mu : \FCoalg\to \FCoalg$ maps $b:B\to FB$ to
\[
(\mu (b))^{-1}:\mu B\to F\mu B
\]
Similarly, the comonad $\mu\nu$ maps $a:FA\to A$ to $(\nu(a))^{-1}:F\nu A\to \nu A$. 
\end{remark}

Next, the presentations of $\mu$ and $\nu$ in terms of (co)limits of (co)chains is analogous to similar constructions of initial algebras and terminal coalgebras. This connection still holds even in the absence of (co)limits:

\begin{remark}
Let $1$ be the terminal object of $\C$ and let $0$ be the initial object. Then there is a unique algebra $1:F1 \to 1$ and $\nu(1)$ is the terminal coalgebra. Similarly, the initial algebra is given by $\mu(0)$ for the unique coalgebra $0: 0 \to F0$. 
\end{remark}

In fact, this is a special case of a more general result on recursive coalgebras and corecursive algebras. 

\begin{definition}
An $\F$-algebra $a$ is corecursive if for every $\F$-coalgebra $b$ there is a unique ca-morphism from $b \to a$. Dually, an $\F$-coalgebra $b$ is recursive if for any $\F$-algebra $a$, there is a unique ca-morphism $b \to a$.
\end{definition} 

Recursivity of a coalgebra relates to the termination of that coalgebra when thought of as a program (c.f. \cite{adamekLuckeMilius2007}). Note that the coalgebra $0\to F0$ is always recursive, and the algebra $F1\to 1$ is always corecursive. The following corollary connects (co)recursivity to the $\mu-\nu$ adjunction:
\begin{corollary}\label{coro:recursive}
A coalgebra $b:B\to FB$ for which $\mu (b)$ exists is recursive if and only if $\mu (b)$ is initial; similarly, an algebra $a:FA\to A$ for which $\nu (a)$ exists is corecursive if and only if $\nu (a)$ is terminal. 
\end{corollary}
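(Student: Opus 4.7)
The plan is to derive both biconditionals as immediate consequences of the representability data provided by Definition~\ref{universalprop}. Recall that the defining isomorphism $\psi\colon \FAlg(\mu(b), -) \cong \Hylo(b, -)$ is, in particular, a pointwise bijection of sets. I first unpack what it means for $b$ to be recursive: by definition, this is the statement that $\Hylo(b,a)$ is a singleton for every $F$-algebra $a$. On the other side, $\mu(b)$ being initial in $\FAlg$ amounts to $\FAlg(\mu(b), a)$ being a singleton for every $a$. Since $\psi$ furnishes a bijection between these two hom-sets for each $a$, the two cardinality conditions coincide, yielding the first biconditional.

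For the second biconditional, I argue by duality, using the natural isomorphism $\phi\colon \FCoalg(-, \nu(a)) \cong \Hylo(-, a)$. Corecursivity of $a$ says that $\Hylo(b, a)$ is a singleton for every coalgebra $b$, while terminality of $\nu(a)$ says the same of $\FCoalg(b, \nu(a))$; these conditions coincide via $\phi$. I do not expect any real obstacle here: the corollary is essentially the observation that the universal properties of $\mu(b)$ and $\nu(a)$ have already encoded ca-morphisms into (co)algebra morphisms, so the uniqueness requested of the former translates immediately into initiality or terminality of the latter.
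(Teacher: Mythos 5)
Your proposal is correct and follows essentially the same route as the paper: both arguments simply read off the equivalence from the defining natural isomorphisms $\psi\colon \FAlg(\mu(b),-)\cong \Hylo(b,-)$ and $\phi\colon \FCoalg(-,\nu(a))\cong \Hylo(-,a)$, noting that recursivity (resp.\ corecursivity) and initiality (resp.\ terminality) are each the statement that the corresponding hom-sets are singletons. No gaps.
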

\begin{proof}
This can be easily read off: $b$ is recursive if and only if $\Hylo(b, a)$ always has a unique element, and $\mu (b)$ is initial if and only if $\FAlg(\mu (b), a)$ always has a unique element. Since
\[
\Hylo(b,a) \cong \FAlg(\mu (b), a)
\]
by definition, the equivalence follows. The second statement follows analogously.
\end{proof}

\section{Concrete Constructions of Relative Fixed Points}\label{sec:constructions}

In this section we provide several concrete constructions of relative fixed points, using a presentation of $\mu$ and $\nu$ based on (co)free (co)algebras.  In Examples \ref{ex1}, \ref{ex2} we explore relative fixed points of polynomial functors and discuss their interpretations. Next, in Proposition \ref{polytopos}, we construct a downward fixed point which classifies Cartesian subcoalgebras in the sense of \cite{adamek2020wellfounded}. In Example \ref{ex:sierp} we illustrate how the Sierpinski carpet may be constructed as a relatively terminal coalgebra. Lastly, we show in Example \ref{ex0}, how the depleted version of the adjunction, that is the Galois connection between post-fixed points and pre-fixed points mentioned in the introduction, may be useful for the Safety Problem as stated in \cite{kori2023exploiting}. 

\begin{proposition}\label{polyexist}
For a polynomial functor $F: \Set \to \Set$, each coalgebra admits a relatively initial algebra, and every algebra admits a relatively terminal coalgebra. 
\end{proposition}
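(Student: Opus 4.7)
The plan is to derive this proposition as a direct application of Theorem~\ref{thm:accessible-existence}, which requires us to verify two conditions: that $\Set$ is locally presentable, and that any polynomial functor on $\Set$ is accessible.

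The first condition is standard: $\Set$ is locally finitely presentable, with the finitely presentable objects being precisely the finite sets. It is therefore locally $\lambda$-presentable for any regular cardinal $\lambda$.

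For the second, I would write a polynomial functor $F \maps \Set \to \Set$ in its normal form $F(X) = \coprod_{i \in I} X^{A_i}$ for some set $I$ and family of sets $(A_i)_{i \in I}$. I would then pick a regular cardinal $\lambda$ strictly greater than every $|A_i|$; since $I$ is a set, the supremum of the $|A_i|$ exists, and any sufficiently large regular cardinal will do (in the finitary case where every $A_i$ is finite, $\lambda = \omega$ suffices). Each exponential $X \mapsto X^{A_i}$ preserves $\lambda$-filtered colimits, since $A_i$ is $\lambda$-presentable in $\Set$. Coproducts in $\Set$ commute with all colimits, so $F$ as a composite is $\lambda$-accessible. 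Combining these two facts, Theorem~\ref{thm:accessible-existence} immediately gives the conclusion.

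The argument is essentially a bookkeeping exercise; I do not anticipate any serious obstacle. The only delicate point is the choice of accessibility cardinal $\lambda$, which must uniformly bound the arities $|A_i|$, but this is handled by the fact that the indexing family is a set rather than a proper class. If the paper intends ``polynomial'' in the narrower sense of finitary polynomial functors (finite sums of finite products and constants), then $\omega$-accessibility is automatic and the argument simplifies accordingly.
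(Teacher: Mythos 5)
Your proposal is correct and matches the paper's own argument: both verify the hypotheses of Theorem~\ref{thm:accessible-existence} by writing $F$ as a coproduct of representables $\sum_{i\in I} y^{X_i}$ and choosing a regular cardinal $\lambda$ bounding the arities so that each representable, and hence the coproduct, is $\lambda$-accessible. Your insistence on $\lambda$ strictly exceeding each $|A_i|$ is if anything a touch more careful than the paper's $\lambda \geq \sup\{|X_i|\}$.
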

\begin{proof}
Theorem \ref{thm:accessible-existence} guarantees their existence if $\F$ is accessible. Let $F = \sum_{i\in I}y^{X_i}$, and let $\lambda$ be a regular cardinal, such that $\lambda \geq \sup\{|X_i|\mid i\in I\}$. Then each $y^{X_i}$ is $\lambda$-accessible, and hence so is their coproduct $F$. 
\end{proof}

In order to give explicit descriptions for $\mu$ and $\nu$ on $\Set$, we exploit the fact that free algebras and cofree coalgebras for polynomial functors on $\Set$ have elegant characterizations. For completeness' sake, we recall the definition of (co)free (co)algebras.
\begin{definition}
Let $F:\C\to\C$ be a functor, and $X$ an object of $\C$. \begin{itemize}
\item A \emph{free algebra on $X$} is an $F$-algebra $\alpha:FT_X\to T_X$ together with a map $\inc:X\to T_X$ such that for each algebra $a:FA\to A$ equipped with a map $j:X\to A$, there is a unique algebra morphism $\ulcorner j\urcorner:T_X\to A$ such that $j = \ulcorner j\urcorner \circ \inc$; we'll call the morphism $\ulcorner j\urcorner$ the \emph{extension} of $j$. 
\item A \emph{cofree coalgebra on $X$} is an $F$-coalgebra $\gamma:C_X\to FC_X$ together with a map $\outc:C_X\to X$ such that for each coalgebra $b:B\to FB$ equipped with a map $c:B\to X$, there is a unique coalgebra morphism $\llcorner c\lrcorner : B\to C_X$ such that $c = \outc\circ \llcorner c\lrcorner$; we'll call the morphism $\llcorner c\lrcorner$ the \emph{coextension} of $c$. 
\end{itemize}
\end{definition}
In the descriptions of (co)free (co)algebras of polynomial functors, we use the notion of a \emph{$\Sigma$-branching tree}. A $\Sigma$-branching tree is a tree $t$ such that each node $u$ in $t$ comes equipped with a choice of $\sigma_u\in \Sigma$ and has children $v_1,\dots, v_{\Ar(\sigma_u)}$, as depicted in Figure \ref{fig:sigmatree}. 

\begin{figure}
\centering
\begin{tikzpicture}[shorten <=10pt, shorten >=10pt]
\node at (0,0) {$u$};
\node at (-1,1.2) {$v_1$};
\node at (1,1.2) {$v_k$};
\node at (0,1.2) {$\dots$};
\draw (0,0) -- (-1,1.2);
\draw (0,0) -- (1,1.2);
\node at (0,0.7) {$\sigma$};
\end{tikzpicture}
\caption{\label{fig:sigmatree}}
\end{figure}
\begin{proposition}
Let $\Sigma$ be a set of symbols, and for each $\sigma\in \Sigma$, fix an arity $\Ar(\sigma)\in\N$. Let $F:\Set\to\Set$ be the polynomial functor given by $FX = \sum_{\sigma\in\Sigma}X^{\Ar(\sigma)}$. Then,
\begin{enumerate}[label = (\roman*)]
\item the free $\F$-algebra on $X$, denoted $T^\Sigma(X)$, is given by the set of finite $\Sigma$-branching trees with leaves labeled by elements of $X$. Equivalently, $T^\Sigma(X)$ is the algebra of $\Sigma$-terms over $X$, known from universal algebra (see \cite{burris1981univalg} for further description of free algebras, as well as congruences and quotients of $F$-algebras). 
\item The cofree $\F$-coalgebra on $X$, denoted $C^\Sigma(X)$, is given by the set of finite and infinite $\Sigma$-branching trees with internal nodes labeled by elements of $X$. 
\end{enumerate}
\end{proposition}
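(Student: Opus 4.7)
The plan is to treat the two parts separately by constructing the claimed (co)algebras explicitly and verifying their universal properties directly. Part (i) follows by ordinary structural induction on trees; part (ii) requires more care since the trees in $C^\Sigma(X)$ may be infinite.

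For (i), I would equip $T^\Sigma(X)$ with the algebra structure $\alpha\maps FT^\Sigma(X) \to T^\Sigma(X)$ that sends $(\sigma, (t_1,\ldots,t_{\Ar(\sigma)}))$ in the summand $T^\Sigma(X)^{\Ar(\sigma)}$ to the tree with root labeled $\sigma$ and children $t_1,\ldots,t_{\Ar(\sigma)}$, and let $\inc\maps X \to T^\Sigma(X)$ send $x$ to the single-leaf tree labeled $x$. Given an algebra $a\maps FA\to A$ and a map $j\maps X\to A$, I would define $\ulcorner j\urcorner$ by induction on tree height: leaves $x$ go to $j(x)$, and a tree with root $\sigma$ and children $t_1,\ldots,t_k$ goes to $a(\sigma, \ulcorner j\urcorner(t_1), \ldots, \ulcorner j\urcorner(t_k))$. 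This is manifestly an algebra morphism extending $j$, and uniqueness is immediate because any such morphism is forced to satisfy the same recursive equation.

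For (ii), I would equip $C^\Sigma(X)$ with the coalgebra structure $\gamma\maps C^\Sigma(X) \to FC^\Sigma(X)$ that returns the root's $\Sigma$-symbol together with the tuple of immediate subtrees, and let $\outc\maps C^\Sigma(X) \to X$ extract the $X$-label of the root. Given $b\maps B\to FB$ and $c\maps B\to X$, the coextension $\llcorner c\lrcorner$ sends $y\in B$ with $b(y) = (\sigma,(y_1,\ldots,y_k))$ to the tree whose root carries the $X$-label $c(y)$, the symbol $\sigma$, and has children $\llcorner c\lrcorner(y_1),\ldots,\llcorner c\lrcorner(y_k)$.

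The main obstacle is justifying this corecursive definition and its uniqueness, since ordinary induction does not apply to infinite trees. I would handle this by specifying $\llcorner c\lrcorner(y)$ level-by-level: the subtree at any finite depth $n$ is uniquely determined by iterating $b$ up to $n$ times from $y$ and then applying $c$ to label the nodes encountered. Any coalgebra morphism $B\to C^\Sigma(X)$ commuting with $\outc$ must agree with $\llcorner c\lrcorner$ at every finite depth, and since trees in $C^\Sigma(X)$ are determined by the collection of their finite-depth restrictions, this forces equality. Alternatively, one may invoke the standard construction of the terminal coalgebra of $GY = X\times FY$ as the inverse limit of the terminal coalgebra chain, which for polynomial $F$ yields precisely the stated set of $\Sigma$-branching trees with $X$-labels on their nodes.
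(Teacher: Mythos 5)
Your construction is correct, but note that the paper itself offers no proof of this proposition: it is stated as a standard fact, with the algebra half delegated to universal algebra (free term algebras) and the coalgebra half to the usual description of cofree coalgebras. Your direct verification is a reasonable substitute. Part (i) is routine, exactly as you say. For part (ii), the cleanest route is the one you mention in passing: the cofree $F$-coalgebra on $X$ is by definition the terminal coalgebra of $G(-) = X\times F(-)$, and since every arity $\Ar(\sigma)$ is finite, $G$ preserves limits of $\omega^{\op}$-chains (finite powers preserve all limits, and coproducts commute with connected limits), so the terminal sequence converges at $\omega$ and its limit is precisely the set of finite and infinite $\Sigma$-branching trees with every node carrying an $X$-label. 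If you instead keep the hands-on argument, two small points deserve explicit mention: existence of $\llcorner c\lrcorner$ requires checking that your depth-$n$ specifications are compatible (the depth-$n$ truncation of the depth-$(n+1)$ stage agrees with the depth-$n$ stage), so that they glue to a genuine tree, and uniqueness uses that elements of $C^\Sigma(X)$ are determined by their finite truncations, which is true for the tree model but is exactly the content of $\omega$-convergence above. Finally, be aware that in your model \emph{all} nodes carry $X$-labels, including leaves arising from nullary symbols; the proposition's phrase ``internal nodes labeled by elements of $X$'' is loose on this point, and the all-nodes-labeled reading is the one the paper itself uses later when characterizing $\nu(a)$ via $a$-guided nodes.
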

The following constructions allows us to describe $\mu$ and $\nu$ in terms of (co)free (co)algebras:
\begin{theorem}
Let $\C$ be a category, and $F:\C\to \C$ an endofunctor. Let $b:B\to FB$ be a coalgebra, and $a:FA\to A$ an algebra.
\begin{itemize}
    \item Assume that $B$ admits a free algebra $T^FB$, with unit $\inc:B\to T^FB$ and free algebra structure $\alpha : FT^FB\to T^FB$. Then a relatively initial algebra for $b$ is the same as a coequalizer of the diagram
    \[
    \begin{tikzcd}
    T^{F}(B) \ar[r,shift left=0.5ex,"id"] \ar[r,shift right=0.5ex,"\unfold",swap] & T^{F}(B)
    \end{tikzcd}\]
    in the category of $F$-algebras and where $\unfold$ is the free extension of the following map to $T^F(B)$
    \[B \xrightarrow{b} FB \xrightarrow{F \inc} F T^F (B) \xrightarrow{\alpha} T^F (B)\]
    \item Assume that $A$ admits a cofree coalgebra $C^FA$, with counit $\outc:C^FA\to A$ and cofree coalgebra structure $\gamma : C^FA\to FC^FA$. Then a relatively terminal coalgebra for $a$ is the same as an equalizer of the diagram
    \[\begin{tikzcd}
        C^{F}(A) \ar[r,shift left=0.5ex,"id"] \ar[r,shift right=0.5ex,"\mathsf{pred}",swap] & C^{F}(A) 
        \end{tikzcd}
        \]
        in the category of $F$-coalgebras where $\mathsf{pred}$ is the coextension of the following map to $C^F(A)$
        \[C^{F}(A) \xrightarrow{\gamma} F C^F (A) \xrightarrow{F \outc} FA \xrightarrow{a} A\]
\end{itemize}
\end{theorem}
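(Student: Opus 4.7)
The plan is to identify the two universal properties directly. Fix a coalgebra $b : B \to FB$; I will show that for any algebra $a : FA \to A$, the set of algebra morphisms $f : T^F B \to A$ satisfying $f \circ \unfold = f$ is in natural bijection with $\Hylo(b,a)$. A coequalizer of $\id, \unfold$ in $\FAlg$ is, by its universal property, precisely the representing object for the functor sending $a$ to this set of morphisms; thus such a bijection identifies the coequalizer as a representing object for $\Hylo(b,-)$, which is exactly the defining property of $\mu(b)$.

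The first step invokes freeness of $T^F B$: algebra morphisms $f : T^F B \to A$ are in natural bijection with maps $j : B \to A$ via $f \mapsto j := f \circ \inc$. The key calculation, which uses that $\unfold$ is the free extension of $\alpha \circ F\inc \circ b$, is
\[
f \circ \unfold \circ \inc \;=\; f \circ \alpha \circ F\inc \circ b \;=\; a \circ Ff \circ F\inc \circ b \;=\; a \circ Fj \circ b,
\]
where the second equality is because $f$ is an algebra morphism. Since both $f \circ \unfold$ and $f$ are algebra morphisms $T^F B \to A$, the uniqueness clause in the universal property of a free algebra tells us that $f \circ \unfold = f$ if and only if they agree after precomposition with $\inc$, i.e., if and only if $a \circ Fj \circ b = j$ --- precisely the condition that $j \in \Hylo(b,a)$. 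Naturality in $a$ is inherited from the naturality of the freeness bijection.

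The dual statement follows by formal duality. The analogous key identity uses that $\mathsf{pred}$ is the coextension of $a \circ F\outc \circ \gamma$; for a coalgebra morphism $g : B \to C^F A$ with coextension $c := \outc \circ g$, one computes
\[
\outc \circ \mathsf{pred} \circ g \;=\; a \circ F\outc \circ \gamma \circ g \;=\; a \circ F\outc \circ Fg \circ b \;=\; a \circ Fc \circ b,
\]
so by the uniqueness clause of cofreeness, $\mathsf{pred} \circ g = g$ if and only if $c$ is a ca-morphism from $b$ to $a$. Assembling as before identifies the equalizer in $\FCoalg$ with the representing object for $\Hylo(-,a)$, i.e., $\nu(a)$.

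The main obstacle is essentially bookkeeping: keeping straight which universal property is being invoked at each step, and being careful that the (co)equalizer is formed inside $\FAlg$ (respectively $\FCoalg$) rather than in $\C$. Because the argument above matches the (co)equalizer's universal property inside $\FAlg$/$\FCoalg$ directly against the defining universal property of $\mu(b)$ or $\nu(a)$, no hypothesis on $\C$ creating (co)equalizers enters the identification --- such hypotheses are only needed separately to guarantee existence of the (co)equalizer in the first place.
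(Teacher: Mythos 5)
Your proposal is correct and follows essentially the same route as the paper's proof: both transport along the freeness bijection between algebra morphisms $T^F B\to A$ and maps $B\to A$, and use the key identity $f\circ\unfold\circ\inc = a\circ Fj\circ b$ together with the uniqueness clause of the free-extension property to identify the coequalizing condition with the ca-morphism condition (and dually for $\nu$). The only difference is packaging --- you phrase it as a natural bijection of hom-sets and representability of $\Hylo(b,-)$, where the paper exhibits an isomorphism of categories restricting to the subcategories of coequalizing morphisms and of ca-morphisms and compares initial objects --- which is cosmetic.
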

\begin{proof}
We only prove the statement for $\mu$, since the statement for $\nu$ follows by duality. Our strategy will be to exhibit two isomorphic categories $\Coeq(\id, \unfold)$ and $\int \Hylo(b, -)$ such that an initial object of $\Coeq(\id,\unfold)$ is a coequalizer of $\id$ and $\unfold$, and an initial object of $\int \Hylo(b,-)$ is a relatively initial algebra for $b$. 

First, consider the category $\mathsf{E}$ with objects algebras $a:FA\to A$ together with a map $j:B\to A$. $T^F(B)$ being a free $F$-algebra on $B$ means that for each $(j,a)$ in $\mathsf{E}$, there is a unique algebra morphism $\ulcorner j \urcorner:T^F(B)\to A$ such that $j = \ulcorner j\urcorner \circ \inc$. It is easy to check that this extends to a functor $\ulcorner-\urcorner:\mathsf{E}\to T^F(B)\setminus \FAlg$, where $T^F(B)\setminus \FAlg$ denotes the coslice category over $T^F(B)$. On the other hand, there is a functor $-\circ \inc : T^F(B)\setminus \FAlg \to \mathsf{E}$ given by
\[
(f:T^F(B)\to A) \mapsto (f\circ \inc : B\to A)
\]
In fact, $-\circ \inc$ and $\ulcorner-\urcorner$ are two-sided inverses, since $\ulcorner j\urcorner\circ \inc = j$ by definition, and $\ulcorner f\circ \inc\urcorner$ and $f$ are both algebra morphisms $g$ satisfying $g\circ \inc = f\circ \inc$, so must be the same by uniqueness of $\ulcorner-\urcorner$. 

So $\mathsf{E}$ and $T^F(B)\setminus \FAlg$ are isomorphic. Now let $\int\Hylo(b,-)$ be the full subcategory of $\mathsf{E}$ consisting of those $(j:B\to A)$ which are ca-morphisms $b\to a$, and let $\Coeq(\id, \unfold)$ be the full subcategory of $T^F(B)\setminus \FAlg$ consisting of those $f:T^F(B)\to A$ such that $f = f\circ\unfold$. We claim that the isomorphism $\mathsf{E}\cong (T^F(B)\setminus \FAlg)$ restricts to an isomorphism $\int \Hylo(b,-)\cong \Coeq(\id,\unfold)$. 

So, we need to show that (i) if $j:b\to a$ is a ca-morphism, then $\ulcorner j\urcorner$ coequalizes $\id$ and $\unfold$, and (ii) if $f:T^F(B)\to A$ coequalizes $\id$ and $\unfold$, then $f\circ \inc$ is a ca-morphism. 
\begin{enumerate}[label = (\roman*)]
\item Let $j:b\to a$ be a ca-morphism. Note that both $\ulcorner j\urcorner$ and $\ulcorner j\urcorner\circ \unfold$ are algebra morphisms $T^F(B)\to A$; by freeness of $T^F(B)$, to show that they are equal it suffices to show that $\ulcorner j\urcorner\circ \inc = \ulcorner j\urcorner\circ \unfold \circ \inc$. Consider the diagram
    \[\begin{tikzcd}[ampersand replacement=\&]
	\& B \&\&\& FB \\
	\\
	{}\&{T^F(B)} \& {T^F(B)} \&\& {FT^F(B)} \\
	\\
	\& A \&\&\& FA
	\arrow["{\ulcorner j\urcorner}", from=3-3, to=5-2]
	\arrow["{\ulcorner j\urcorner}"{description}, from=3-2, to=5-2]
	\arrow["\inc"{description}, from=1-2, to=3-2]
	\arrow["\unfold\circ \inc", from=1-2, to=3-3]
	\arrow["j", swap, from=1-2, to=5-2, bend right = 50]
	\arrow["a", from=5-5, to=5-2]
	\arrow["{F \ulcorner j\urcorner}"{description}, from=3-5, to=5-5]
	\arrow["{F \inc}"{description}, from=1-5, to=3-5]
	\arrow["b", from=1-2, to=1-5]
	\arrow["{\alpha}", from=3-5, to=3-3]
    \arrow["Fj", from=1-5, to=5-5, bend left = 50]
    \arrow[phantom, from=3-2, to=3-3, "(*)", description]
\end{tikzcd}\]
Here, the facet labeled (*) is the equality to be established. The outer facet commutes since $j$ is assumed to be a ca-morphism. The top right square commutes by definition of $\unfold$. The bottom right square commutes since $\ulcorner j \urcorner$ is an algebra morphism. Since all but one facet commutes, the remaining facet commutes as well, and hence $\ulcorner j\urcorner \circ \inc = \ulcorner j \urcorner \circ \unfold \circ \inc$, from which it follows that $ \ulcorner j\urcorner$ coequalizes $\id$ and $\unfold$.
\item Assume $f:T^F(B)\to A$ is an algebra morphism that coequalizes $\id$ and $\unfold$. Then consider the diagram
\[
\begin{tikzcd}
FB\arrow[r, "F\inc"] & FT^\F(B)\arrow[r, "Ff"] \arrow[d, "\alpha"] & FA\arrow[dd, "a"] \\
 & T^F(B)\arrow[dr, "f"] & \\
B\arrow[uu, "b"] \arrow[r, "\inc", swap]&T^F(B)\arrow[u, "\unfold"{description}]\arrow[r, "f", swap]&A
\end{tikzcd}
\]
Here the left square commutes by definition of $\unfold$. The top right commutes since $f$ is an algebra morphism, and the bottom right commutes since $f = f\circ\unfold$ by assumption.
\end{enumerate}
So, we have seen that indeed, $\Coeq(\id, \unfold)\cong \int \Hylo(b,-)$. Note moreover that an initial object of $\Coeq(\id, \unfold)$ is exactly a coequalizer of $\id$ and $\unfold$, and an initial object of $\int \Hylo(b,-)$ is exactly a relatively initial algebra for $b$. We conclude that (1) an algebra $a:FA\to A$ equipped with a ca-morphism $j:b\to a$ is relatively initial for $b$ if and only if $\ulcorner j \urcorner$ is a coequalizer for $\id$ and $\unfold$, and (2) an algebra morphism $f:T^F(B)\to A$ is a coequalizer for $\id$ and $\unfold$ if and only if $A$ is relatively initial for $b$ with respect to the ca-morphism $f\circ \inc$. 
\end{proof}

Unpacking the above equalizers and coequalizers in the case of polynomial functors on $\Set$ gives the following corollary. The proof of this corollary is left to the reader.
\begin{proposition}\label{prop:polychar}
Let $F:\Set\to\Set$ be a polynomial functor, say $FX = \sum_{\sigma\in\Sigma}X^{\Ar(\sigma)}$.
\begin{enumerate}[label = (\roman*)]
\item Let $b:B\to FB$ be a coalgebra. We can consider both $B$ and $FB$ as subsets of $T^\Sigma(B)$, via $\inc:B\to T^\Sigma(B)$ and $\alpha\circ F\inc:FB\to T^\Sigma(B)$. Now let $\approx_b$ be the congruence on $T^\Sigma(B)$ generated by $\{x\approx b(x) \mid x\in B\}$; then $\mu(b)$ is given by \[T^\Sigma(B)/{\approx_b}.\] 
\item Let $a:FA\to A$ be an $F$-algebra. Let $t\in C^\Sigma(A)$ be an $A$-labeled $\Sigma$-tree, and let $u$ be a node in $t$, in configuration \begin{tikzpicture}[shorten <=10pt, shorten >=10pt, baseline={([yshift=20pt]current bounding box.south)}] \node at (0,0) {$u:x$};\node at (-1,1.2) {$v_1:y_1$};\node at (1,1.2) {$v_k:y_k$};\node at (0,1.2) {$\dots$};\draw (0,0) -- (-1,1.2);\draw (0,0) -- (1,1.2);\node at (0,0.7) {$\sigma$};\end{tikzpicture}. We call $u$ \emph{$a$-guided} if $a(\sigma(y_1,\dots, y_k)) = x$. Then $\nu(a)$ is given by 
\[
\{t\in C^\Sigma(A)\mid \forall u\in t: u\text{ is }a\text{-guided}\}
\]
\end{enumerate}
\end{proposition}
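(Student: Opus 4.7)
The plan is to apply the preceding theorem to each part, reducing the claim to an explicit computation of a coequalizer in $\FAlg$ and an equalizer in $\FCoalg$, and then unpacking these using the combinatorial descriptions of $T^\Sigma(B)$ and $C^\Sigma(A)$ given earlier.

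For part (i), the preceding theorem identifies $\mu(b)$ with the coequalizer of $\id$ and $\unfold$ in $\FAlg$. Since $F$ is finitary, $\FAlg$ is a variety of $\Sigma$-algebras, in which coequalizers are computed by quotienting by the congruence generated by the parallel pair. So it suffices to show that the congruence on $T^\Sigma(B)$ generated by $R := \{(t,\unfold(t))\mid t\in T^\Sigma(B)\}$ coincides with $\approx_b$. The key observation is that $\unfold$, being the free-algebra extension of $\alpha\circ F\inc\circ b$, satisfies $\unfold(\inc(x)) = \alpha(F\inc(b(x)))$ for each $x\in B$; under the identifications of $B$ and $FB$ with subsets of $T^\Sigma(B)$ this is exactly the relation $x\approx b(x)$. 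Hence the generators of $\approx_b$ lie in $R$, so $\approx_b$ is contained in the congruence generated by $R$. For the reverse containment, I would induct on the structure of $t\in T^\Sigma(B)$: leaves $\inc(x)$ give $(\inc(x),\unfold(\inc(x)))$ directly as a generator of $\approx_b$; for $t = \sigma(t_1,\dots,t_k)$, since $\unfold$ is an algebra morphism one has $\unfold(t) = \sigma(\unfold(t_1),\dots,\unfold(t_k))$, and the inductive hypotheses combined with closure under $\sigma$ give $t\approx_b \unfold(t)$.

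For part (ii), the preceding theorem identifies $\nu(a)$ with the equalizer of $\id$ and $\mathsf{pred}$ in $\FCoalg$. Polynomial functors on $\Set$ preserve equalizers (being built from constants, coproducts, and $\Set$-indexed powers, each of which preserves equalizers in $\Set$), so the forgetful functor $\FCoalg\to\Set$ creates them; the equalizer is therefore carried by $\{t\in C^\Sigma(A)\mid \mathsf{pred}(t)=t\}$ with the inherited coalgebra structure. To finish, I would unpack $\mathsf{pred}$: as the coextension of $a\circ F\outc\circ\gamma$, it is the unique coalgebra morphism $C^\Sigma(A)\to C^\Sigma(A)$ with $\outc\circ \mathsf{pred} = a\circ F\outc\circ\gamma$. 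Concretely, if $t$ has root label $x$, symbol $\sigma$, and children $t_1,\dots,t_k$ with root labels $y_1,\dots,y_k$, then the root of $\mathsf{pred}(t)$ carries label $a(\sigma(y_1,\dots,y_k))$, symbol $\sigma$, and children $\mathsf{pred}(t_1),\dots,\mathsf{pred}(t_k)$. Applying this description corecursively at every node, $\mathsf{pred}(t) = t$ holds if and only if the relabeling is trivial at every node, which is precisely the $a$-guided condition.

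The main obstacle is the congruence calculation in (i); the structural induction is routine, but care is needed in tracking how both $B$ and $FB$ sit inside $T^\Sigma(B)$ and how $\unfold$ transports those identifications. Once the identity $\unfold\circ\inc = \alpha\circ F\inc\circ b$ is in hand, the remainder of both parts is a direct unpacking of definitions, relying on the standard facts that finitary polynomial $\Set$-functors yield varieties on the algebra side and preserve equalizers on the coalgebra side.
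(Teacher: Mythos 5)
Your proposal is correct and takes essentially the approach the paper intends: the paper explicitly leaves this proof to the reader as a direct unpacking of the preceding (co)equalizer theorem, which is exactly what you carry out. Your key identities --- $\unfold\circ\inc=\alpha\circ F\inc\circ b$ plus the congruence induction for (i), and the corecursive description of $\mathsf{pred}$ together with the fact that polynomial functors preserve equalizers (so the equalizer in $\FCoalg$ is computed on underlying sets) for (ii) --- correctly supply the omitted details.
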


This proposition also shows a connection between the $\nu$-construction, and coequations. To illustrate this, consider what happens in the $\mu$-construction: A coalgebra $b:B\to FB$ is treated as a `(flatly) recursive set of equations' $x \approx b(x)$. Then this set of equations can be used construct a quotient $\mu(b)$ of the free $\F$-algebra. Comparing this to the coalgebra-to-algebra picture, it has been noted before that giving a coalgebra-to-algebra morphism $b\to a$ is akin to solving the system of equations presented by $b$ in the algebra $a$~\cite{adamek2006iterative}. We propose that there is a dual perspective: rather than solving the system of equations $b$ in $a$, one could also see a coalgebra-to-algebra morphism as \emph{solving the coequation $a$ in $b$}. To our knowledge, the `coequations-as-algebras' perspective is new. We can leverage $\nu$ to fit it into the wider spectrum of coequational logic. As demonstrated in \cite{dahlqvist2021coeq}, the most general definition of a coequation is `a subcoalgebra of a cofree coalgebra'. Point (ii) of Proposition \ref{prop:polychar} then shows how each algebra gives rise to a canonical coequation. 

As a final note, we should highlight an important difference between our current approach to (co)equations, and the one common in universal (co)algebras: in the latter, the main notion is that of \emph{satisfaction} of (co)equations, whereas we focus on \emph{solving} (co)equations. A coequation $E\subseteq C^\Sigma(X)$ is satisfied by $b:B\to FB$ if every coalgebra-to-algebra morphism $B\to C^\Sigma(X)$ factors through $E$. It can quickly be seen that for coequations of the form $\nu(a)$, a coalgebra $b:B\to FB$ satisfies $\nu(a)$ if and only if \emph{every} map $B\to A$ is a coalgebra-to-algebra morphism. Such a situation is exceedingly rare. This also shows that only particular coequations can be described as $\nu(a)$. 

We will now use the above theorems to study some explicit examples.
\begin{example}\label{ex1}
Let $F: \Set \to \Set$ be the functor given by $FX = \{\times,\checkmark\} \times X$. Let $a$ be the algebra $a \maps F(X) \to X $ with carrier $X=\{0,1\}$ given by 
\[ \hfill
    (\checkmark,  s)   \mapsto  1-s \text{ and }(\times,  s)  \mapsto  s
\hfill \]
where $s$ is either $0$ or $1$. The algebra may be depicted as 
\[ \hfill\begin{tikzcd} 
0 \ar[loop left]{l}{\times} \ar[r,"\checkmark"]  & \ar[l] 1 \ar[loop right]{r}{\times}
\end{tikzcd} \hfill \]
Then $\nu(a)$ has a carrier given by
\[  \left\{\!{u_1\choose s_1}\!{u_2\choose s_2}\!{u_3\choose s_3}\cdots \in (\{\times,\checkmark\}\times X)^{\omega}\ \middle|\ u_i = \times \!\implies\! s_i=s_{i+1} \text{ and } u_i= \checkmark \!\implies\! s_i=1-s_{i+1}\right\}  \]
i.e. the subset of streams in $(\{\times,\checkmark\})^{\omega}$ which follow the action of $a$ \emph{when read from right to left}. Given a coalgebra $b : B \to \{\times,\checkmark\} \times B$, a coalgebra-to-algebra morphism may represent a solution to the constraint represented by $a$. That is, we divide the states of $B$ into two classes, such that the division `respects the algebra structure on $A$'. If $m$ is such a marking, we obtain
\[ \hfill
\begin{tikzcd}
   \{\times,\checkmark\} \times B \ar[r,"F{\hat m}"] & \{\times, \checkmark\} \times \nu(a)\\
   \ar[r,"\hat m "] B \ar[u, "b"] & \nu(a) \ar[u] 
\end{tikzcd}
\hfill \]
via the universal property of $\nu$. Intuitively, $\hat m$ maps a state $x$ to the stream of `tags and classes' that are observed when running $b$ forwards. The constraint on $m$ then states that whenever a $\checkmark$ is observed, the class must change, whereas whenever a $\times$ is observed, the class must stay the same. A marking satisfying this constraint exists, if and only if on each cycle in $B$, the number of $\checkmark$'s is even. 

\end{example}
\begin{example}\label{ex2}
Consider the coalgebra $\mathcal{A}$ for the functor $FX = \{ \times, \checkmark\} \times X^{\{a,b\}}$ as depicted in Figure~\ref{fig:automaton}
\begin{figure}
\centering
\begin{tikzpicture}[scale = 1.5]
\node at (0,0) {$q_0$};
\node at (-1,-1) {$q_1$};
\node[draw,circle] at (1,-1) {$q_2$};

\draw[<->, shorten <= 7pt, shorten >= 7pt] (0,0) -- node[anchor = south east] {a} (-1,-1);
\draw[->, shorten <= 7pt, shorten >= 12pt] (0,0) -- node[anchor = south west] {b} (1,-1);
\draw[->, shorten <= 7pt, shorten >=7pt] (-1,-1) .. controls (-2,0) and (-2,-2) .. node[anchor = east] {b} (-1,-1);
\draw[->, shorten <=12pt, shorten >=12pt] (1,-1) .. controls (2,0) and (2,-2) .. node[anchor = west] {a,b} (1,-1);
\end{tikzpicture}
\vspace{-4ex}
\caption{\label{fig:automaton}}
\end{figure}
with carrier given by $X=\{q_0,q_1,q_2\}$. Let $\approx$ be the smallest congruence satisfying
\begin{center}
\begin{tikzpicture}
\node at (-0.5,0) {$q_0$};

\node at (2,-0.6) {$\times$};
\node at (1,0.6) {$q_1$};
\node at (3,0.6) {$q_2$};

\node at (0.3,0) {$\approx$};

\draw[shorten <= 10pt, shorten >= 10pt] (2,-0.6) -- node[anchor = north east] {a} (1,0.6);
\draw[shorten <= 10pt, shorten >= 10pt] (2,-0.6) -- node[anchor = north west] {b} (3,0.6);

\begin{scope}[xshift = 5cm]
\node at (-0.5,0) {$q_1$};

\node at (2,-0.6) {$\times$};
\node at (1,0.6) {$q_0$};
\node at (3,0.6) {$q_1$};

\node at (0.3,0) {$\approx$};

\draw[shorten <= 10pt, shorten >= 10pt] (2,-0.6) -- node[anchor = north east] {a} (1,0.6);
\draw[shorten <= 10pt, shorten >= 10pt] (2,-0.6) --  node[anchor = north west] {b} (3,0.6);
\end{scope}

\begin{scope}[xshift = 10cm]
\node at (-0.5,0) {$q_2$};

\node at (2,-0.6) {$\checkmark$};
\node at (1,0.6) {$q_2$};
\node at (3,0.6) {$q_2$};

\node at (0.3,0) {$\approx$};

\draw[shorten <= 10pt, shorten >= 10pt] (2,-0.6) -- node[anchor = north east] {a} (1,0.6);
\draw[shorten <= 10pt, shorten >= 10pt] (2,-0.6) --  node[anchor = north west] {b} (3,0.6);
\end{scope}

\node at (3.75,-0.1) {{,}};
\node at (8.75,-0.1) {{,}};
\end{tikzpicture}
\end{center}
Then the carrier of $\mu(\mathcal{A})$ is given by
\[{\text{ finite }\{a,b\}\text{ branching trees with $\{\times,\checkmark\}$ labeling internal nodes and $X$ labeling leaves}}/{\approx}\]
where the quotient denotes a quotient in $\Set$, i.e. the set in the numerator modulo the equivalence relation $\approx$. One may see it as terms over the 2 binary operations $\times$ and $\checkmark$ in the three unknowns $\{q_0,q_1,q_2\}$, where $q_0,q_1,q_2$ satisfy a mutual recursive relationship. 
\end{example}
\begin{example}\label{ex:polytopos}
Let $\Sigma$ be a set of symbols, and for each $\sigma\in \Sigma$, fix an arity $\Ar(\sigma) \in \N$. Let $F:\Set\to\Set$ be the polynomial functor given by $FX = \sum_{\sigma\in \Sigma}X^{\Ar(\sigma)}$. Since polynomial functors preserve pullbacks, it follows from Corollary 3.2 in \cite{johnstone2001topos} that $\FCoalg$ is an (elementary) topos. Its subobject classifier $\Omega$ is the coalgebra of `non-decreasing $\Sigma$-trees'; that is, the points of $\Omega$ are $\mathbf{2}$-labeled $\Sigma$-trees, where the label of a child may not be smaller than the label of its parent. 

$\Omega$ is not a fixed point; however, there is a subcoalgebra $\Omega_{\str}$ which is a fixed point, and arises as $\nu$ of a well-chosen algebra. Consider the algebra $\bigwedge:F\mathbf{2}\to\mathbf{2}$, explicitly
\[
\hfill
\bigwedge:\sigma(x_1,\dots, x_n) \mapsto \begin{cases}1 & x_i = 1\text{ for all }i = 1,\dots, n\\0&\text{ otherwise}\end{cases}
\hfill
\]
Then $\nu(\bigwedge)$ is a subcoalgebra of $\Omega$; it consists of those non-decreasing $\Sigma$-trees where zeroes `cannot disappear', i.e. if a node is labeled with $0$, at least one of its children is labeled with $0$. \end{example}

$\Omega_{\str}$ satisfies a universal property similar to the subobject classifier in $\FCoalg$; but instead of classifying \emph{all} subcoalgebras, it classifies only the \emph{Cartesian} subcoalgebras i.e., those subcoalgebras $s:S\leq X$ such that the square
\[\hfill
\begin{tikzcd}
S\arrow[r, "s", tail]\arrow[d] & X\arrow[d]\\
FS\arrow[r, "Fs", tail] & FX
\end{tikzcd}\hfill
\]
is a pullback square. Explicitly, that means that there is a map $\top:Z\to \Omega_\str$ (with $Z$ the terminal coalgebra), such that for each coalgebra $X$ and each Cartesian subobject $S\leq X$, there is a unique map $\phi^\str_S:X\to \Omega_\str$ such that
\[
\hfill
\begin{tikzcd}
S\arrow[r]\arrow[d, tail] 
\arrow[dr, phantom, very near start, " "{pullback}]& Z\arrow[d, "\top"]\\
X\arrow[r, "s"] & \Omega_\str
\end{tikzcd}
\hfill
\]
is a pullback square. Ordinary subobjects are understood as `forward stable subsets': they are subsets $S$ such that if $s\in S$, then so are all the successors of $s$. Cartesian subcoalgebras are those subsets which also satisfy the converse implication: if all successors of $s$ are in $S$, then so is $s$. 

\eject

More formally, let $\xi:X\to FX$ be a coalgebra, and consider the `next-time modality' $\nexttime:P(X)\to P(X)$ from \cite{jacobs2017introduction}, defined on a subobject $U\leq X$ via the pullback
\[
\hfill
\begin{tikzcd}
\nexttime(U)\arrow[r, tail]\arrow[d]\arrow[dr, phantom, " "{pullback}, very near start]& X\arrow[d, "\xi"]\\
FU\arrow[r, tail] & FX
\end{tikzcd}
\hfill
\]
Then subcoalgebras are subsets $P\subseteq X$ such that $P\subseteq \nexttime P$; these are classified by $\Omega$. In \cite{adamek2020wellfounded}, they show that Cartesian subcoalgebras are fixed points for $\nexttime$, i.e.\ they satisfy $P = \nexttime P$. 
\begin{proposition}\label{polytopos}
$\Omega_\str$ classifies Cartesian subcoalgebras.
\end{proposition}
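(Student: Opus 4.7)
The plan is to combine the universal property of $\nu(\bigwedge)$ from Section~\ref{sec:universal} with a direct identification of coalgebra-to-algebra morphisms into $\bigwedge$ with Cartesian subcoalgebras. First, by Theorem~\ref{thm:adj} and Definition~\ref{universalprop}, for any coalgebra $\xi\maps X\to FX$ there is a natural bijection
\[
\FCoalg(\xi,\Omega_\str) \;=\; \FCoalg(\xi,\nu(\bigwedge)) \;\cong\; \Hylo(\xi,\bigwedge),
\]
which by~\eqref{eq:phieps} is given by post-composition with the counit $\epsilon\maps\Omega_\str\to\mathbf{2}$ (the ``root-label'' map). Unpacking the ca-morphism equation $\chi = \bigwedge\circ F\chi\circ\xi$ for a function $\chi\maps X\to\mathbf{2}$ shows that $\chi$ is a ca-morphism iff $\chi(x)=1$ precisely when every $\xi$-child of $x$ is sent to $1$; equivalently, the fibre $\chi^{-1}(1)$ satisfies $\chi^{-1}(1)=\nexttime\chi^{-1}(1)$, so it is a Cartesian subcoalgebra. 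Conversely, the characteristic function of any Cartesian subcoalgebra is a ca-morphism, yielding a natural bijection between $\FCoalg(X,\Omega_\str)$ and Cartesian subcoalgebras of $X$.

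Next, I would define $\top\maps Z\to\Omega_\str$ as the morphism corresponding under this bijection to the maximal subobject $Z\leq Z$; equivalently, it is the unique coalgebra morphism with $\epsilon\circ\top$ equal to the constant-$1$ map $Z\to\mathbf{2}$, which is a ca-morphism because $\bigwedge\sigma(1,\dots,1)=1$ for every $\sigma\in\Sigma$. Concretely, $\top$ sends each tree $z\in Z$ to the same-shape tree with every node labelled $1$; in particular, $\top$ is injective with image exactly the all-$1$ trees in $\Omega_\str$.

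The remaining task is to verify that the bijection above is implemented by pullback along $\top$. Since $F$ is polynomial, it preserves pullbacks, so finite limits in $\FCoalg$ are created by the forgetful functor to $\Set$ and it suffices to compute pullbacks in $\Set$. For a Cartesian $S\leq X$ with characteristic function $\chi_S=\epsilon\circ\phi^\str_S$, the tree $\phi^\str_S(x)$ has each node labelled by $\chi_S$ of the corresponding descendant of $x$ (this is the concrete form of the coextension for polynomial $F$, cf.\ Proposition~\ref{prop:polychar}). Hence $\phi^\str_S(x)$ lies in the image of the injective map $\top$ iff every descendant of $x$ lies in $S$, which, since $S$ is a subcoalgebra, is equivalent to $x\in S$. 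Thus the pullback of $\top$ along $\phi^\str_S$ is $S\hookrightarrow X$, and uniqueness of $\phi^\str_S$ is inherited from the injectivity of the bijection.

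The step I expect to be the main obstacle is the concrete unpacking of $\phi^\str_S$ and $\top$ needed to carry out the pullback computation: one has to pin down that $\phi^\str_S(x)$ is the tree obtained by unfolding $x$ under $\xi$ and relabelling each node by $\chi_S$, and that $\top$ picks out precisely the all-$1$ trees. Once these descriptions are in place, the Cartesian condition $S=\nexttime S$ collapses ``all descendants of $x$ lie in $S$'' to just ``$x\in S$'', and the desired pullback square drops out.
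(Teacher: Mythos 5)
Your proposal is correct and takes essentially the same route as the paper's proof: both reduce coalgebra morphisms into $\Omega_\str=\nu(\bigwedge)$ to ca-morphisms $X\to\mathbf{2}$ via the universal property, identify these with characteristic functions of Cartesian subcoalgebras, and verify the classifying pullback square using that a tree in $\Omega_\str$ whose root is labelled $1$ is labelled $1$ everywhere (i.e.\ lies in the image of $\top$). The differences are presentational only --- you use the $\nexttime$-fixed-point characterization where the paper argues the $x\notin P$ case directly from the pullback property, you compute the pullback element-wise via the coextension where the paper pastes pullback squares through $h:\Omega_\str\to\mathbf{2}$, and you get uniqueness from injectivity of the bijection rather than from the classifier $\Omega$ --- with the one small correction that the forgetful functor $\FCoalg\to\Set$ creates pullbacks (which $F$ preserves), not all finite limits, and pullbacks are all you need.
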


\begin{proof}
We wish to show that $\Omega_{\str} = \nu(\bigwedge)$ classifies Cartesian subobjects. We first prove that Cartesian subobjects are closed under pullbacks. 

Assume $P\leq X$ is a Cartesian subcoalgebra. Let $y:Y\to X$ be a coalgebra morphism. Then consider the following cube, where we write $y^*P$ for the pullback of $P$ along $y$:
\[\hfill\begin{tikzcd}
	Fy^*P && FY \\
	& FP && FX \\
	y^*P && Y \\
	& P && X
	\arrow[from=1-1, to=1-3]
	\arrow[from=3-1, to=1-1]
	\arrow[from=3-3, to=1-3]
	\arrow[from=3-1, to=3-3]
	\arrow[from=1-1, to=2-2]
	\arrow[from=1-3, to=2-4]
	\arrow[from=2-2, to=2-4, crossing over]
	\arrow[from=4-4, to=2-4]
	\arrow[from=4-2, to=2-2, crossing over]
	\arrow[from=4-2, to=4-4]
	\arrow[from=3-3, to=4-4]
	\arrow[from=3-1, to=4-2]
\end{tikzcd}\hfill\]
Note that since $\F$ preserves pullbacks, we obtain a unique arrow $y^*P\to Fy^*P$. In the above cube, the front square is a pullback since $P$ is Cartesian, and the bottom square is a pullback by definition of $y^*P$. Hence, we see that taking the top and back square together, as in
\[\hfill
\begin{tikzcd}
y^*P\arrow[r]\arrow[d] & Y\arrow[d]\\
Fy^*P\arrow[r]\arrow[d] & FY\arrow[d]\\
FP\arrow[r] & FX
\end{tikzcd}\hfill
\]
the outer square is a pullback. The bottom square is also a pullback, since $\F$ preserves pullbacks; hence the top square is a pullback, which shows that $y^*P$ is Cartesian. 

\vspace{10pt}

Now consider the terminal object $Z$ in $\FCoalg$; this is the coalgebra of finite and infinite $\Sigma$-branching trees. We note that $\top:Z\to \Omega$, which maps a tree $t$ to $t$ constantly labeled with $1$, factors through $\Omega_\str$; and moreover $\top$ is a \emph{Cartesian} subcoalgebra of $\Omega_\str$. So whenever $P\leq X$ is a pullback of $\top:Z\to\Omega_\str$, $P$ is a Cartesian subcoalgebra. Uniqueness of classifiers $X\to \Omega_\str$ follows from uniqueness of classifiers $X\to\Omega$, so it suffices to show that if $P\leq X$ is Cartesian, there exists a classifier $\phi^\str_P:X\to \Omega_\str$, such that $P = (\phi^\str_P)^*\top$. 

We know that $\FCoalg(X, \Omega_\str)\cong \Hylo(X, \bigwedge)$, so we may equivalently provide a coalgebra-to-algebra map $X\to 2$. We claim that the characteristic function 
\[
\hfill
\chi_P:x\mapsto \begin{cases}1 & x\in P\\0 & \text{ otherwise}\end{cases}
\hfill
\]
is a coalgebra-to-algebra morphism. For, consider an arbitrary $x\in X$. Let $\xi(x) = \sigma(x_1,\dots, x_n)$. We consider two cases.
\begin{enumerate}[label = (\roman*)]
\item If $x\in P$, then since $P$ is a subcoalgebra, we know $x_i\in P$ for all $i$; hence, \[\hfill\bigwedge(\sigma(\chi_P(x_1),\dots, \chi_P(x_n))) = \bigwedge(\sigma(1,\dots, 1)) = 1 = \chi_P(x).\hfill\]
\item If $x\notin P$, then it suffices to show that at least one of the $x_i$ is also not in $P$. Assume towards a contradiction that $x_i\in P$ for all $i$. Then the following square commutes:
\[
\hfill
\begin{tikzcd}
\{*\}\arrow[r, "{*\mapsto x}"]\arrow[d, "{*\mapsto \sigma(x_1,\dots, x_n)}", swap]&X\arrow[d, "\xi"]\\
FP\arrow[r] & FX
\end{tikzcd}
\hfill
\]
hence since $P$ was Cartesian, we conclude that the map $*\mapsto x$ factors through the inclusion $P\rightarrowtail X$. But this amounts to saying $x\in P$, which is not the case. 

We conclude that there is an $x_i$ with $\chi_P(x_i) = 0$, and hence
\[
\hfill \bigwedge(\sigma(\chi_P(x_1),\dots, \chi_P(x_n))) = 0 = \chi_P(x).\hfill
\]
\end{enumerate}
So in both cases, we have $\bigwedge(F\chi_P(\xi(x))) = \chi_P(x)$, which shows that $\chi_P$ is a coalgebra-to-algebra morphism. 

We conclude that there is a unique coalgebra morphism $\phi^\str_P:X\to \Omega_\str$ such that $\chi_P = h\circ \phi^\str_P$, where $h:\Omega_\str\to \mathbf{2}$ is the universal coalgebra-to-algebra morphism, mapping a labeled $\Sigma$-tree to the label of its root. We still need to show that $P$ is the pullback of $\top$ along $\phi^\str_P$. Note, however, that
\[
\hfill
\begin{tikzcd}
Z\arrow[r]\arrow[d, "\top"] & 1\arrow[d, "{*\mapsto 1}"]\\
\Omega_\str\arrow[r, "h"] & \mathbf{2}
\end{tikzcd}
\hfill
\]
is a pullback square (in $\Set$), since if the root node of a non-decreasing $\Sigma$-tree $t$ is labeled by 1, then so are all the other nodes in $t$, and hence $t$ is in the image of $\top$. Hence, we can fill in the following diagram:
\[\hfill
\begin{tikzcd}
P\arrow[d, tail]\arrow[r] & Z\arrow[d, "\top"] \arrow[r] & 1\arrow[d, "{*\mapsto 1}"]\\
X\arrow[r, "\phi^\str_P"]\arrow[rr, "\chi_P", bend right] & \Omega_\str \arrow[r, "h"] & \mathbf{2}
\end{tikzcd}
\hfill\]
Here, the outer square is a pullback, since $\chi_P$ classifies $P$ in $\Set$, and we have just shown that the right-hand side is a pullback as well. Therefore, the left-hand square is a pullback, which finishes the proof. 
\end{proof}

\begin{example}\label{ex:sierp}

In \cite{noquez2021sierpinski}, the Sierpinski carpet is presented as a final coalgebra in a category of `square metric spaces'. In this section we recall this work and then show how the downward fixed point construction $\nu$ gives a more direct way of constructing the Sierpinski carpet as a final coalgebra.
\begin{definition}
Let $\blacksquare$ denote the set $[0,1]^2$ where $[0,1]$ is the real unit interval. Let $\square$ denote the boundary of $\blacksquare$ or explicitly
\[
\hfill
\square = \{ (i,r) : i \in \{0,1\}, r \in [0,1] \} \cup \{ (r,i) : r \in [0,1], i \in \{0,1\} \}
\hfill
\] Let $\MS$ be the category whose objects are metric spaces with diameter less than $2$ and whose morphisms are short maps $f : (X,d) \to (X',d')$ i.e.\ a function $f : X \to X'$ such that $d(x,y) \leq d'(f(x),f(y))$. 
\end{definition}
We are interested in two different metrics on $\square$:
\begin{itemize}
\item The path metric $d_p : \square \times \square \to \R$ with $d_p(x,y)$ given by the length of the shortest path in $\square$ between $x$ and $y$.
\item The taxicab metric $d_t : \square \times \square \to \R$ given by $d_t((s,r),(s',r'))=|s'-s| + |r'-r|$.
\end{itemize}
\begin{definition}
A square metric space is a metric space $(X,d)$ equipped with an injective function $S : \square \hookrightarrow X$ such that for all $x,y \in \square$,
\[
\hfill
d_t(x,y) \leq d(S(x),S(y)) \leq d_p(x,y)
\hfill
\]
A morphism of square metric spaces $f : (X,S) \to (X',S')$ is a short map $f : X \to X'$ such that $S' = f \circ S$. This defines a category $\SqMS$ of square metric spaces and their morphisms.
\end{definition}
A key example of a square metric space is $\square$ itself, equipped with the path metric, and with $S_\square:\square\to\square$ being the identity; this forms the initial object of $\SqMS$. Another important square metric space is $\blacksquare$ with the Euclidean metric, and $S_\blacksquare:\square\to\blacksquare$ the inclusion. 

We now define an endofunctor on square metric spaces for which the Sierpinski carpet is a fixed point. We present the following definitions informally. The full definitions may be found in \cite{noquez2021sierpinski}.
\begin{definition}
Let $M$ be the set $\{0,1,2\}^2 \setminus (1,1)$. For a square metric space $S : \square \to X$, $M \otimes X$ is eight copies of $X$ in a three-by-three grid with the center removed. Mathematically, $M\otimes X$ is the Cartesian product $M \times X$ modulo the smallest equivalence relation which identifies the upper edge of square $(i,j)$ with the lower edge of square $(i,j+1)$ whenever both are valid indices in $M$, and similarly for the left and right edge of adjacent copies. We write $m \otimes x$ to denote the equivalence class of $(m,x)$ in $M \otimes X$. We can equip $M \otimes X$ with the structure of a square metric space. To do this, we first define a metric on $M\times X$ by
\[
d_{M\times X}(m\otimes x, n\otimes y) = \begin{cases} \frac{1}{3}d(x,y) & m = n \\ 2&\text{otherwise}\end{cases}
\]
Then we equip $M\otimes X$ with the coursest metric such that the quotient map $M\times X\to M\otimes X$ is short (see \cite{noquez2021sierpinski} for a detailed description). There is a map $\square \to M \otimes X$ which maps $\square$ injectively to the `outer boundary'. For a short map $f : X \to Y$, there is a short map $M \otimes f : M \otimes X \to M \otimes Y$ given by
$m \otimes x \mapsto m \otimes f(x)$. This defines a functor
\[\hfill M \otimes - \maps \SqMS \to \SqMS \hfill \]
\end{definition}
As shown in \cite{noquez2021sierpinski}, $M \otimes -$ has an initial algebra. $\square$ is an initial object in $\SqMS$ so the initial algebra may be found by taking the colimit of the usual chain 
\[ \hfill
\square \to M \otimes \square \to M \otimes M \otimes \square \to \ldots \hfill
\]
As $\SqMS$ does not have a final object, we cannot construct a final coalgebra by taking the limit of the dual of this chain. However, our construction $\nu$ does not require a final object in the base category.
\begin{figure}[h]
\centering
\includegraphics[scale=.3]{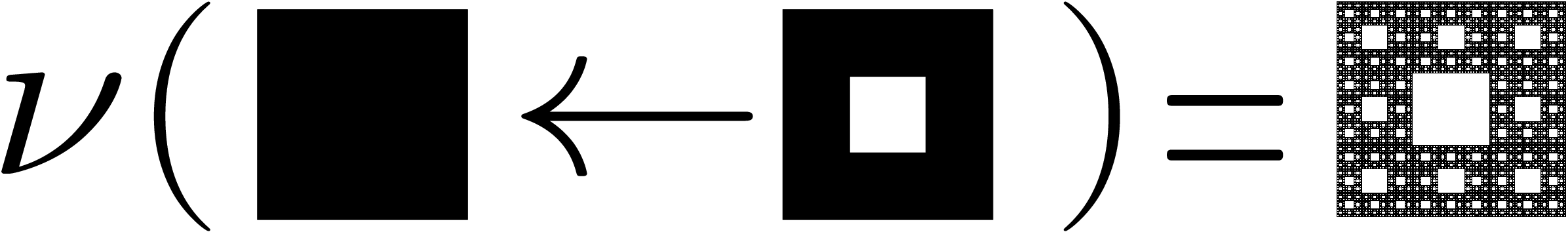}
    \caption{The Sierpinski carpet is the downward fixed point of the indicated algebra}\label{sierp}
\end{figure}
\hfill\\
The square metric space $M \otimes \blacksquare$ is the same as $\blacksquare$ except with the middle removed. There is an algebra $a \maps M \otimes \blacksquare \to \blacksquare $ given by the natural inclusion. As illustrated in Figure \ref{sierp}, the Sierpinski carpet is given by the downward fixed point $\nu$ applied to this algebra.
\begin{proposition}
The downward fixed point $\nu(\blacksquare \leftarrow M \otimes \blacksquare)$ is the Sierpinski carpet.
\end{proposition}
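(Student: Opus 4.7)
The plan is to realise $\nu(a)$ as the limit of the terminal cochain afforded by Proposition~\ref{prop:iterfix}(ii) and then to identify that limit with the Sierpinski carpet. Concretely, I would argue that $\nu(a)$ is the limit in $\SqMS$ of
\[
\blacksquare \xleftarrow{a} M\otimes \blacksquare \xleftarrow{M\otimes a} M^2\otimes\blacksquare \xleftarrow{M^2\otimes a} \cdots,
\]
provided $\SqMS$ admits limits of such $\omega$-cochains and $M\otimes -$ preserves them. The chain-completeness of $\SqMS$ can be checked directly: a cofiltered limit of square metric spaces is computed on underlying sets as the usual inverse limit, equipped with the supremum of the pulled-back metrics and with a compatible injection from $\square$ assembled from the injections into each $X_n$. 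For the continuity of $M\otimes -$, one observes that $M\otimes X$ is eight scaled copies of $X$ quotiented by a closed boundary-identification relation; because every transition map in our cochain is an isometric embedding onto a closed subspace, the boundary identifications at level $n+1$ pull back to those at level $n$, so the canonical comparison $M\otimes(\lim_n X_n)\to \lim_n(M\otimes X_n)$ is an isomorphism in $\SqMS$.

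Having established that $\nu(a)$ is the above cofiltered limit, it remains to identify the limit with the Sierpinski carpet. The iterated algebra $a\circ(M\otimes a)\circ\cdots\circ(M^{n-1}\otimes a):M^n\otimes\blacksquare\to\blacksquare$ is an isometric embedding whose image is the classical $n$-th level Sierpinski approximation $S_n\subseteq\blacksquare$ (the unit square with the middle ninth removed, and then the middle ninth removed from each of the eight remaining sub-squares, iterated $n$ times). Under these embeddings the transition map $M^{n+1}\otimes\blacksquare\to M^n\otimes\blacksquare$ becomes the subset inclusion $S_{n+1}\hookrightarrow S_n$. Consequently the inverse limit reduces to the decreasing intersection $\bigcap_{n\geq 0} S_n\subseteq \blacksquare$, which is the standard IFS description of the Sierpinski carpet. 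The induced square metric space structure on this intersection has $S_\square$ given by the outer boundary inclusion and its metric inherited from the Euclidean metric on $\blacksquare$, matching the presentation in \cite{noquez2021sierpinski}.

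The principal obstacle is establishing $\omega$-continuity of $M\otimes -$, since this functor is defined by a quotient construction and quotients do not in general commute with cofiltered limits. The saving feature is the isometric-embedding property of the transition maps in our specific cochain, which makes the boundary identifications compatible across the inverse system; once one knows this, the comparison map is surjective because every consistent choice of representatives in the factors lifts, and injective because each identification is already detected at a finite stage. Verifying this carefully is where the detailed metric computations of \cite{noquez2021sierpinski} would be invoked; once in hand, Proposition~\ref{prop:fixed} and the Yoneda-style uniqueness of $\nu(a)$ immediately yield the claimed identification.
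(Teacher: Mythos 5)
Your proposal follows essentially the same route as the paper: both realise $\nu(a)$ as the limit of the cochain $\blacksquare \leftarrow M\otimes\blacksquare \leftarrow M^2\otimes\blacksquare \leftarrow \cdots$ and identify it, via the injectivity of the transition maps, with the decreasing intersection $\bigcap_{n\geq 0} M^n\otimes\blacksquare$, i.e.\ the Sierpinski carpet. The only difference is one of detail: you explicitly verify the chain-completeness of $\SqMS$ and the continuity of $M\otimes -$ along this cochain, prerequisites that the paper's two-line argument leaves implicit.
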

\begin{proof}
Because every morphism in the chain 
\[\hfill \blacksquare \leftarrow M \otimes \blacksquare \leftarrow M \otimes M \otimes \blacksquare \ldots \hfill\]
is an injection, its limit is the intersection \[\hfill\bigcap_{n=0}^{\infty} M^n \otimes \blacksquare \hfill\]
This infinite intersection is the usual definition of the Sierpinski carpet.

\end{proof}

We have seen that the Sierpinski carpet may be obtained in a more straightforward way than in \cite{noquez2021sierpinski} using a relatively terminal fixed point construction. Additionally, it can be shown directly that $a: M\otimes \blacksquare \to \blacksquare$ is in fact a corecursive algebra. Hence, by Corollary~\ref{coro:recursive}, we find that the Sierpinski carpet $\nu(a)$ is the terminal $M\otimes -$-coalgebra; this proof strategy avoids the need to construct specific paths through metric spaces, as is done in \cite{noquez2021sierpinski}. 

Other fractals may be generated as downward fixed points in a similar way; for example one can imagine that the Sierpinski triangle may constructed as a downward fixed point in a category of `triangular metric spaces'. 
\end{example}

\begin{example}\label{ex0}
In \cite{kori2023exploiting}, the authors state the Safety Problem. This problem may be rephrased in terms of the Galois connection 
\[ \hfill
\begin{tikzcd}
Post(F) \ar[r,bend left,"\mu_F"] \ar[r,phantom,"\bot"] & Pre(F) \ar[l,bend left,"\nu_F"]
\end{tikzcd}
\hfill \]
for a particular choice of $F$ and assuming that the set of initial states forms a post-fixed point.
\begin{definition}
A transition system is a triple $(S,I,\delta)$ where $S$ is a set of states, $I \subseteq S$, is a set of initial states, and $\delta \maps S \to \mathcal{P}(S)$ is a transitition relation. Here $\mathcal{P}(S)$ is the powerset of $S$ which is a complete lattice ordered by $\subseteq$. Let $F: \mathcal{P}(S) \to \mathcal{P}(S)$ be the monotone function defined by $F(X)=\bigcup_{x \in X} \delta(x)$ and suppose that $I$ is a post fixed point, i.e., $I \subseteq F(I)$. For a set $P \in \mathcal{P}(S)$, the \emph{Safety Problem} for $(I,P,S,F)$ asks if $\mu_F(I) \subseteq P$.
\end{definition}
The idea here is that $\mu_F(I)$ is the set of reachable states from $I$ and if $\mu_F(I) \subseteq P$, then we say that $I$ is $P$-safe. Now suppose that $P$ is a pre-fixed point $F(P) \subseteq P$. Then the adjunction of this paper says that 
\[\mu_F(I) \subseteq P \iff I \subseteq \nu_F(P)\]
While $\mu_F(I)$ represents the states reachable from $I$, $\nu_F(P)$ are the states which never go above $P$. In this case the adjunction suggests a strategy for verifying the Safety Problem. One may answer the Safety Problem by simultaneously unfolding $I$ and $P$ using $F$. In other words on the first step we check if $I \subseteq P$; if it is then we check $F(I) \subseteq P $ and $I \subseteq F(P)$. If either of those are false, then we know $I$ is not $P$-safe. If both are true then we continue to check $F^2(I) \subseteq P$ and $I \subseteq F^2(P)$. We continue this process indefinitely, checking to see if any of $F^n(I) \subseteq P$ and $I \subseteq F^n(P)$ are false. If we can't falsify any of these inclusions and we arrive at a fixed point (either $\mu_F(I)$ or $\nu_F(P)$), then we know that $I$ is $P$-safe.

A major limitation of this approach is that we require $I$ to be increasing and $P$ to be decreasing. In other cases, a different analysis will be necessary to verify safety. Regardless, we believe these ideas may be used to develop an effective algorithm for the Safety Problem.
\end{example}

\section{Preservation results}\label{sec:preservation}

In this section, we explore when functors preserve $\mu$ and $\nu$. To this end, we take inspiration from \cite{capretta2006recursive}, and focus on an adjoint situation equipped with a `step' $\theta$. 
Steps are a generalisation of distributive laws from
functors on one category to functors on two possibly different categories that are connected via an adjunction.
This requires the ingredients depicted in equation \eqref{eq:step}.
\begin{equation}\label{eq:step}\hfill
\begin{tikzcd}[cells={nodes={minimum size=0.8cm}}]
\C \arrow[loop left, "F"] \arrow[r, bend left, "L"{name=L}] & \D\arrow[loop right, "G"]\arrow[l, bend left, "R"{name=R}]
\arrow[phantom, "\vdash" marking, from=R, to=L]
\end{tikzcd}
\qquad \theta:LF\Rightarrow GL \hfill
\end{equation}
We note that such a $\theta$ comes equipped with its \emph{mate} $\theta^\flat:FR\to RG$ defined as the composite of
\[
\begin{tikzcd}
FR\arrow[r, "\eta"] & RLFR \arrow[r, "R\theta_R"] & RGLR\arrow[r, "RG\epsilon"] & RG
\end{tikzcd}
\]
(and indeed this mate correspondence is a bijection, as shown in \cite{kelly1974el2cat}). This situation covers a wide range of examples. Of particular interest are those cases where $\D$ is an Eilenberg-Moore category $\C^T$ or Kleisli category $\Kl(T)$ for a monad $T$ on $\C$. In these cases, the existence of a lifting or an extension $\bar F$ of an endofunctor $F:\C\to\C$ is equivalent to the existence of a step. \cite[Thm. 3]{rot2021steps} 

\begin{definition}
Consider the data of Scenario \eqref{eq:step}. $L$ extends to a functor $\bar L:\FCoalg\to \FCoalg[G]$ given by
\[\hfill
\begin{tikzcd}
FB\\
B\arrow[u,"b"]
\end{tikzcd}
\qquad\mapsto \qquad
\begin{tikzcd}
GLB\\
LFB\arrow[u, "\theta"]\\
LB\arrow[u, "Lb"]
\end{tikzcd}
\hfill
\]
Similarly, $R$ extends to a functor $\bar R:\FAlg[G]\to \FAlg$ given by
\[\hfill
\begin{tikzcd}
GA\arrow[d, "a"]\\
A
\end{tikzcd}
\qquad\mapsto\qquad
\begin{tikzcd}
FRA\arrow[d, "\theta^\flat"]\\
RGA\arrow[d, "Ra"]\\
RA
\end{tikzcd}
\hfill
\]
\end{definition}
These functors $\bar L$ and $\bar R$ satisfy something akin to an adjoint relationship. Before stating this relationship, we recall the following (`useful') lemma:
\begin{lemma}\label{lemma:useful}
If $\theta:LF\to GL$ is a step, with mate $\theta^\flat$, the following two squares commute:
\[
\hfill
\begin{tikzcd}
F\arrow[d, "\eta_F"] \arrow[r, "F\eta"] & FRL\arrow[d, "\theta^\flat_L"] & LFR\arrow[d, "\theta_R"] \arrow[r, "L\theta^\flat"] & LRG\arrow[d, "\epsilon_G"]\\
RLF\arrow[r, "R\theta"] & RGL & GLR\arrow[r, "G\epsilon"]& G
\end{tikzcd}
\hfill
\]
\end{lemma}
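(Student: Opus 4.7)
The statement is a classical 2-categorical fact about mates under an adjunction --- indeed, each of the two squares could equivalently be taken as the characterization of $\theta^\flat$, so the plan is simply to verify that the explicit formula for the mate given in the excerpt satisfies both identities. No genuine obstacle is anticipated; the argument proceeds by naturality and the triangle identities for $L \dashv R$.

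For the left-hand square, I would start by unfolding
\[
\theta^\flat_L \;=\; RG\epsilon_L \circ R\theta_{RL} \circ \eta_{FRL}
\]
and then chase the composite $\theta^\flat_L \circ F\eta$ via three successive rewrites. First, naturality of $\eta$ applied to the morphism $F\eta \colon F \to FRL$ rewrites $\eta_{FRL} \circ F\eta$ as $RLF\eta \circ \eta_F$. Second, naturality of $\theta \colon LF \Rightarrow GL$ at the unit $\eta \colon \id \Rightarrow RL$, whiskered with $R$, yields $R\theta_{RL} \circ RLF\eta = RGL\eta \circ R\theta$. Third, the triangle identity $\epsilon_L \circ L\eta = \id_L$ collapses $RG\epsilon_L \circ RGL\eta$ into $RG(\epsilon_L \circ L\eta) = \id_{RGL}$. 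What remains is precisely $R\theta \circ \eta_F$, as required.

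The right-hand square is entirely dual. Unfolding $L\theta^\flat = LRG\epsilon \circ LR\theta_R \circ L\eta_{FR}$, one slides $\epsilon_G$ past $LRG\epsilon$ by naturality of $\epsilon$, then past $LR\theta_R$ by naturality of $\epsilon$ at $\theta_R$, and finally cancels $\epsilon_{LFR} \circ L\eta_{FR} = \id_{LFR}$ by the same triangle identity. This leaves $G\epsilon \circ \theta_R$, matching the claimed equation.

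The only point deserving any care is the bookkeeping of whiskerings --- in particular, recognising that $\theta^\flat_L$ involves the component $\epsilon_L$ of the counit at $L$ rather than $\epsilon$ itself, so that the triangle identity which fires is $\epsilon_L \circ L\eta = \id_L$ rather than its dual $R\epsilon \circ \eta_R = \id_R$. Drawing each calculation as a string diagram in the $2$-category $\Cat$ makes both squares transparent and avoids any indexing confusion.
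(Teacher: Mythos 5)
Your verification is correct: both chases (naturality of $\eta$ at $F\eta$, naturality of $\theta$ at $\eta$, and the triangle $\epsilon_L\circ L\eta=\id_L$ for the left square; the dual slides of $\epsilon$ and the same triangle identity for the right square) go through exactly as written. The paper itself gives no proof of this lemma, deferring to \cite{rot2021steps}, and your computation is precisely the standard mate-calculus verification that reference supplies, so there is no divergence in approach --- your write-up simply makes explicit what the paper leaves as a citation. The one aside worth tempering is the opening claim that ``each square could equivalently be taken as the characterization of $\theta^\flat$'': this is true because the mate correspondence is a bijection, but it is not needed for the proof and you do not argue it, so it is best presented as a remark rather than a premise.
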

See e.g. \cite{rot2021steps} for a proof.

\begin{lemma}\label{lemma:baradj}
Let $b:B\to FB$ be an $\F$-coalgebra, and $a:GA\to A$ a $G$-algebra. The natural isomorphism $\Hom_\D(LB, A)\cong \Hom_\C(B, RA)$ restricts to a natural isomorphism
\[
\hfill
\Hylo[G](\bar Lb, a)\cong \Hylo(b, \bar Ra)
\hfill
\]
\end{lemma}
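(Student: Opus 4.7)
The plan is to show that the adjunction bijection $\Hom_\D(LB,A)\cong \Hom_\C(B,RA)$ restricts to the claimed bijection on ca-morphisms. Write $f\mapsto f^\sharp:=Rf\circ\eta_B$ and $g\mapsto g^\natural:=\epsilon_A\circ Lg$ for the two transposes; since these are already mutually inverse bijections, it suffices to verify that $f$ is a ca-morphism $\bar Lb\to a$ if and only if $f^\sharp$ is a ca-morphism $b\to\bar Ra$. Naturality of the restricted bijection in $b$ and $a$ is then automatic from naturality of the hom-bijection, once one notes that the action of $\bar L$ on coalgebra morphisms and of $\bar R$ on algebra morphisms precisely matches the pre- and post-composition built into the ambient profunctors.

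For the forward direction, by definition $f$ is a ca-morphism iff $f=a\circ Gf\circ\theta_B\circ Lb$, and I need
\[
f^\sharp = Ra\circ\theta^\flat_A\circ Ff^\sharp\circ b.
\]
I would compute the right-hand side in four small steps. First expand $Ff^\sharp=FRf\circ F\eta_B$. Then push $FRf$ past $\theta^\flat_A$ using naturality of $\theta^\flat\maps FR\Rightarrow RG$ at $f$, obtaining $RGf\circ\theta^\flat_{LB}$. Next, apply the first square of Lemma~\ref{lemma:useful} at $B$ to rewrite $\theta^\flat_{LB}\circ F\eta_B$ as $R\theta_B\circ\eta_{FB}$. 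Finally, push $\eta_{FB}$ past $b$ by naturality of $\eta$, yielding $RLb\circ\eta_B$. Collecting all terms under $R$ gives
\[
R(a\circ Gf\circ\theta_B\circ Lb)\circ\eta_B,
\]
which by the ca-morphism hypothesis on $f$ equals $Rf\circ\eta_B=f^\sharp$.

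The reverse direction is formally dual. Starting from $g=Ra\circ\theta^\flat_A\circ Fg\circ b$, I would compute $a\circ Gg^\natural\circ\theta_B\circ Lb$ by expanding $g^\natural=\epsilon_A\circ Lg$, using naturality of $\theta\maps LF\Rightarrow GL$ at $g$ to move $\theta_B$ past $GLg$, then applying the second identity of Lemma~\ref{lemma:useful} at $A$ to rewrite $G\epsilon_A\circ\theta_{RA}$ as $\epsilon_{GA}\circ L\theta^\flat_A$, and finally naturality of $\epsilon$ at $a$ to arrive at $\epsilon_A\circ L(Ra\circ\theta^\flat_A\circ Fg\circ b)=\epsilon_A\circ Lg=g^\natural$.

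The only real obstacle is the bookkeeping of the forward diagram chase: it requires invoking precisely the right instances of naturality (for $\theta^\flat$ and $\eta$) and of Lemma~\ref{lemma:useful} in the correct order. Conceptually nothing beyond the mate correspondence and the triangle identities is used, so once the substitution order is fixed the verification is routine.
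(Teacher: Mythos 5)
Your proposal is correct and follows essentially the same route as the paper: both arguments check that the adjunction transposes $Rf\circ\eta$ and $\epsilon\circ Lg$ carry ca-morphisms to ca-morphisms, using naturality of $\theta$, $\theta^\flat$, $\eta$, $\epsilon$ together with the two squares of Lemma~\ref{lemma:useful}. The only difference is presentational: you do the chase equationally while the paper pastes the same four commuting cells into a single diagram.
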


\begin{proof}
Fix a coalgebra-to-algebra morphism $f:\bar Lb\to a$. Consider $f$'s transpose $\tilde f = Rf\circ\eta$ along the adjunction. We claim that $\tilde f$ is a coalgebra-to-algebra morphism $b\to \bar Ra$. This can be seen in the following diagram:
\[
\hfill
\begin{tikzcd}
FRLB\arrow[rr, "FRf"]\arrow[dr, "\theta^\flat_L"] && FRA\arrow[d, "\theta^\flat"]\\
FB\arrow[u, "F\eta"]\arrow[dr, "\eta_F"]& RGLB\arrow[r, "RGf"] & RGA\arrow[dd, "Ra"]\\
& RLFB\arrow[u, "R\theta"]\\
B\arrow[uu, "b"] \arrow[r, "\eta"] & RLB\arrow[u, "RLb"] \arrow[r, "Rf"] &RA
\end{tikzcd}
\hfill
\]
Here, the bottom right square is the ca-morphism square for $f$; the top right is a naturality square for $\theta^\flat$; the top left is given by Lemma \ref{lemma:useful}; and the bottom left is naturality for $\eta$. The outside of the square is a ca-morphism square for $\tilde f$. 

On the other hand, let $g :b\to \bar Ra$ be a ca-morphism. We claim that its transpose is again a ca-morphism $\bar Lb\to a$. This is completely dual to the previous case; but for completeness, it can be seen in the following diagram:
\[
\hfill
\begin{tikzcd}
GL(B)\arrow[rr, "GLg"] &&GLRA\arrow[d, "G\epsilon"]\\
LF(B)\arrow[u, "\theta"]\arrow[r, "LFg"] & LFRA\arrow[ur, "\theta"]\arrow[d, "L\theta^\flat"]&GA\arrow[dd, "a"]\\
&LRGA\arrow[d, "LRa"]\arrow[ur, "\epsilon"]\\
L(B)\arrow[uu, "b"] \arrow[r, "Lg"] & LRA\arrow[r, "\epsilon"] & A
\end{tikzcd}
\hfill
\] 
\end{proof}

In \cite{capretta2006recursive}, it was shown that $\bar L$ preserves recursive coalgebras, and (dually) $\bar R$ preserves corecursive algebras. This now follows directly from the above lemma; however, we can obtain the stronger result that $\bar L$ commutes with the induced monad $\nu\mu$, and $\bar R$ commutes with the induced comonad $\mu\nu$. 
\begin{theorem}\label{thm:steppreserve}
Consider an adjoint situation as in \eqref{eq:step}. Let $b:B\to FB$ be an $\F$-coalgebra, and $a:GA\to A$ a $G$-algebra. 
\begin{enumerate}[label = (\roman*)]
\item $\nu\mu(\bar Lb) = \bar L(\nu\mu(b))$
\item $\mu\nu(\bar Ra) = \bar R(\mu\nu(a))$
\end{enumerate}
\end{theorem}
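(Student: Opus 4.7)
The plan is to prove (i); (ii) follows by the symmetric dual argument (interchanging algebras and coalgebras, $F$ and $G$, $\bar L$ and $\bar R$, and $L \dashv R$ with $R^\op \dashv L^\op$). The main tools are Lemma~\ref{lemma:baradj}, the universal properties of $\mu$ on both the $F$- and $G$-sides, and the adjunction $L\dashv R$ together with the mate correspondence codified in Lemma~\ref{lemma:useful}.

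First I would assemble, for each $G$-algebra $a$, the chain of natural isomorphisms
\[
\FAlg[G](\mu(\bar L b), a) \cong \Hylo[G](\bar L b, a) \cong \Hylo(b, \bar R a) \cong \FAlg(\mu b, \bar R a),
\]
using Lemma~\ref{lemma:baradj} in the middle and Definition~\ref{universalprop} at the two ends. Then I would transport the right-hand hom-set along $L\dashv R$: writing $\tilde f = \epsilon_A\circ L f$ for the transpose of $f\colon \mu B\to RA$, a diagram chase using naturality of $\theta\colon LF\Rightarrow GL$ and the right-hand square of Lemma~\ref{lemma:useful} turns the algebra compatibility $f\circ \mu b = Ra\circ \theta^\flat_A\circ Ff$ into
\[
\tilde f\circ L\mu b = a\circ G\tilde f\circ \theta_{\mu B}.
\]
By Proposition~\ref{prop:fixed}, $\mu b$ is an isomorphism, so this rewrites as $\tilde f = a\circ G\tilde f\circ(\theta_{\mu B}\circ L(\mu b)^{-1})$. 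By Remark~\ref{rem:flip}, the composite $\theta_{\mu B}\circ L(\mu b)^{-1}$ is precisely the structure map of the $G$-coalgebra $\bar L(\nu\mu b)$, so the equation says exactly that $\tilde f$ is a ca-morphism from $\bar L(\nu\mu b)$ to $a$. Thus $\FAlg(\mu b, \bar R a) \cong \Hylo[G](\bar L(\nu\mu b), a) \cong \FAlg[G](\mu(\bar L(\nu\mu b)), a)$ by one more application of the universal property of $\mu$ on the $G$-side.

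Chaining everything, $\FAlg[G](\mu(\bar L b), -)\cong \FAlg[G](\mu(\bar L(\nu\mu b)), -)$ naturally, and Yoneda produces an isomorphism $\mu(\bar L b)\cong \mu(\bar L(\nu\mu b))$ in $\FAlg[G]$; applying $\nu$ gives $\nu\mu(\bar L b)\cong \nu\mu(\bar L(\nu\mu b))$. To conclude, I would argue that $\bar L(\nu\mu b)$ is itself a $G$-coalgebra isomorphism: tracing the Yoneda-induced comparison back through the chain shows it factors through $\theta_{\mu B}$ in a way that, combined with Proposition~\ref{prop:fixed} (which makes both $\mu(\bar L b)$ and $L\mu b$ invertible), forces $\theta_{\mu B}$ to be invertible. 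Then Remark~\ref{rem:flip} applied to the fixed-point coalgebra $\bar L(\nu\mu b)$ gives $\nu\mu(\bar L(\nu\mu b)) = \bar L(\nu\mu b)$, completing the identification $\nu\mu(\bar L b) = \bar L(\nu\mu b)$.

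The main obstacle is the diagram chase converting the $\theta^\flat$-condition on $f$ into the corresponding $\theta$-condition on $\tilde f$; this requires several applications of naturality together with Lemma~\ref{lemma:useful}, but is routine once one sets the cube up carefully. The more conceptual subtlety is the implicit invertibility of $\theta_{\mu B}$: the equality statement in the theorem is only truly meaningful under this assumption, and extracting it from the mere existence of $\mu(\bar L b)$ requires close inspection of the Yoneda-constructed comparison map.
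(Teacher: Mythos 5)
Up to its final step, your argument is the paper's argument: the chain
\[
\FAlg[G](\mu(\bar Lb),a)\cong\Hylo[G](\bar Lb,a)\cong\Hylo(b,\bar Ra)\cong\FAlg(\mu b,\bar Ra)\cong\Hylo[G](\bar L(\nu\mu b),a)
\]
is exactly the one in the paper (your explicit transpose computation with $\tilde f=\epsilon_A\circ Lf$, naturality of $\theta$ and the right-hand square of Lemma~\ref{lemma:useful} is precisely Lemma~\ref{lemma:baradj} applied to the coalgebra $(\mu b)^{-1}$, combined with Remark~\ref{rem:flip}), and that part of your chase is correct. The divergence is at the end: the paper closes the chain by writing $\FAlg[G]\bigl((\bar L(\mu(b)^{-1}))^{-1},a\bigr)$, i.e.\ it invokes Remark~\ref{rem:flip} for the coalgebra $\bar L(\nu\mu b)$ and thereby silently treats its structure map $\theta_{\mu B}\circ L(\mu b)^{-1}$ as invertible, whereas you correctly flag this invertibility as the crux and try to derive it.

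That derivation is the genuine gap: invertibility of $\theta_{\mu B}$ is not forced by the existence of the relative fixed points or by inspecting the Yoneda comparison. Invertibility of $\mu(\bar Lb)$ (Proposition~\ref{prop:fixed}) and of $L(\mu b)$ gives no control over $\theta_{\mu B}$, and it can genuinely fail: take $\C=\D=\Set$ with the identity adjunction, $F=\Id$, $G$ the constant functor at the one-element set $1$, $\theta$ the unique natural transformation, and $b=\id_B$ with $|B|\geq 2$. Then $\mu b=\id_B$, so $\bar L(\nu\mu b)$ is the non-invertible coalgebra $B\to 1$ with carrier $B$, while $\mu(\bar Lb)$ is the initial $G$-algebra and $\nu\mu(\bar Lb)$ has carrier $1$; in particular the two sides of (i) are not isomorphic, so no amount of tracing the comparison map can produce the invertibility you need. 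What the chain really proves is $\mu(\bar Lb)\cong\mu(\bar L(\nu\mu b))$, equivalently $\nu\mu(\bar Lb)\cong\nu\mu(\bar L(\nu\mu b))$; to get the stated identity one must additionally know that $\theta_{\mu B}$ (hence the structure map of $\bar L(\nu\mu b)$) is an isomorphism. This extra input is harmless in the intended applications --- in Corollary~\ref{coro:monad} the step is an identity --- but it is an assumption, not a consequence, and your last paragraph should be replaced by it (or the conclusion weakened accordingly).
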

\begin{proof}
We only prove (i), since (ii) follows by duality. Let $b$ be a (fixed) $\F$-coalgebra, and $a$ a $G$-algebra. By Remark \ref{rem:flip}, we know that $\nu\mu(b) = \mu(b)^{-1}$, and $\nu\mu(\bar Lb) = \mu(\bar Lb)^{-1}$; hence, it suffices to show
\[
\mu(\bar Lb) = (\bar L(\mu (b)^{-1}))^{-1}
\]
Using Lemma \ref{lemma:baradj}, we have the following chain of equivalences, natural in $a$:
\begin{align*}
\Hylo[G](\bar Lb,a)&\cong \Hylo(b, \bar Ra)\\
&\cong \Hom_{\FAlg}(\mu(b), \bar Ra)\\
&\cong \Hylo(\mu(b)^{-1}, \bar Ra)\\
&\cong \Hylo[G](\bar L(\mu(b)^{-1}),a)\\
&\cong \Hom_{\FAlg[G]}(\bar L(\mu(b)^{-1})^{-1}, a)
\end{align*}
\end{proof}
This general theorem can be used to prove preservation in various specific circumstances. 
\begin{corollary}\label{coro:monad}
Let $T:\C\to\C$ be a monad, let $F:\C\to \C$ be an endofunctor. Write $j\dashv U$ for the Kleisli adjunction of the monad, and $T\dashv |-|$ for the Eilenberg-Moore adjunction. 
\begin{enumerate}[label = (\roman*)]
\item Assume that $\F$ extends to a functor $\bar F:Kl(T)\to Kl(T)$ with $\bar Fj = jF$. Then $j$ commutes with $\mu$ and $U$ commutes with $\nu$.
\item Assume that $\F$ extends to a functor $\bar F:EM(T)\to EM(T)$ with $\bar FT = TF$. Then $T$ commutes with $\mu$ and $|-|$ commutes with $\nu$. 
\end{enumerate}
\end{corollary}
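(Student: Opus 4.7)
The plan is to obtain both parts of Corollary \ref{coro:monad} as direct instances of Theorem \ref{thm:steppreserve}, where in each case the required ``step'' $\theta$ can be taken to be the identity.

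For part (i), I would instantiate \eqref{eq:step} with $L = j$, $R = U$ and $G = \bar F$. The hypothesis $\bar F j = j F$ is an honest equality of functors $\C \to \Kl(T)$, so the identity natural transformation $\theta = \mathrm{id}_{jF} \maps jF \Rightarrow \bar F j$ is trivially a step. Because $\theta$ is the identity, unwinding Definition~4.2 shows that $\bar L = \bar j$ sends an $F$-coalgebra $b \maps B \to FB$ simply to $jb$ viewed as an $\bar F$-coalgebra; and dually the same equality lets us lift $j$ to $F$-algebras by sending $a \maps FA \to A$ to $ja \maps \bar F jA = jFA \to jA$. Theorem~\ref{thm:steppreserve}(i) then delivers $\nu\mu(\bar j b) = \bar j(\nu\mu(b))$, and invoking Remark~\ref{rem:flip} to recognise both sides as the inverses of isomorphism algebras, I would invert to get $\mu(\bar j b) = j(\mu b)$, which is the statement that $j$ commutes with $\mu$. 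An entirely symmetric application of Theorem~\ref{thm:steppreserve}(ii) together with Remark~\ref{rem:flip} gives $\nu(\bar U a) = U(\nu a)$, i.e.\ $U$ commutes with $\nu$.

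Part (ii) is handled identically, instantiating $L = T$ and $R = |-|$; the hypothesis $\bar F T = TF$ again provides the identity as a step, and the same reasoning yields the two commutation statements.

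The only genuinely non-mechanical point is the bookkeeping check that when $\theta = \mathrm{id}$ the general definitions of $\bar L$ and $\bar R$ collapse to ``apply $L$'' and ``apply $R$'' in exactly the way needed to make the conclusions of Theorem~\ref{thm:steppreserve} coincide with the statements of the corollary. In particular one must confirm that, under the identity step, $L$ descends to both the algebra and coalgebra categories (so that expressions like $\bar j(\mu b)$ and $j(\mu b)$ genuinely agree) and that the mate $\theta^\flat$ induces the correct $\bar R$. Once this translation is recorded the proof is purely formal, and the main pedagogical content is simply exhibiting the right trivial step in each case.
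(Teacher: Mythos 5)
Your proposal matches the paper's own proof: both parts are obtained as instances of the adjoint situation \eqref{eq:step} with the step taken to be the identity, so that Theorem~\ref{thm:steppreserve} applies directly. The extra bookkeeping you record (that under the identity step $\bar L$ and $\bar R$ collapse to ``apply $L$'' and ``apply $R$'', and the passage via Remark~\ref{rem:flip} from the $\nu\mu$/$\mu\nu$ statements to commutation with $\mu$ and $\nu$) is exactly what the paper leaves implicit in its ``follows immediately''.
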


\begin{proof}
Both of these are instances of adjoint situation \eqref{eq:step} with the step given by identities, and hence the statement follows immediately from Theorem~\ref{thm:steppreserve}.
\end{proof}

\subsection*{$\mu$ and $\nu$ Coincide in a Dagger Category}

When coalgebras for a polynomial functor $F:\Set \to \Set$ are interpreted as $\F$-shaped automata, the initial $\F$-algebra serves as finite trace semantics and the (weakly) terminal $\F$-coalgebra gives an infinite trace semantics~\cite{jacobs04a,hasichsok:2007}. When $\F$ is no longer a $\Set$-functor this interpretation breaks down. For example if $F: \Rel\to \Rel$, where $\Rel$ is the category of sets and relations, then the initial algebra and terminal coalgebra coincide \cite{smyth1982category}. In \cite{karvonen2019way}, it is shown that this holds more generally in any dagger category. With this coincidence, the initial algebra/final coalgebra gives a finite trace semantics instead of an infinite trace semantics. To obtain a semantics for infinite traces, Urabe and Hasuo construct an object which is weakly terminal among coalgebras and define the infinite trace semantics as the maximal map into this object \cite{hasuo2018coalgebraic}. Note that the limit colimit coincidence causes no issues when $\mu(c)$ is interpreted as a semantic object for $c$. However, a generalized limit colimit coincidence also holds for the fixed points generated by $\mu$ and $\nu$.
\begin{definition}
A dagger category $(\C,\dag)$ is a category equipped with an identity on objects functor $\dag: \C \to \C^{\op}$ such that $\dag^2=id$.\end{definition}
\begin{theorem}
    Suppose that $(\C,\dag)$ is a dagger category with limits and colimits of countable chains and $F: \C \to \C$ is a dagger functor preserving such limits and colimits. Then  there is an isomorphism
    \[ \hfill \mu(c)^\dag \cong \nu(c^\dag)\hfill \]
    for each coalgebra $c$. Dually, for each algebra $a$, there is an isomorphism $\nu(a)^\dag \cong\mu(a^\dag)$.
\end{theorem}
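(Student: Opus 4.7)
The plan is to exploit the explicit (co)limit constructions of $\mu$ and $\nu$ from Proposition~\ref{prop:iterfix}, together with the fact that a dagger provides an identity-on-objects isomorphism $\dag:\C\to\C^{op}$, which therefore interchanges limits and colimits in $\C$.

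First, setting $\lambda=\omega$ in Proposition~\ref{prop:iterfix}, the hypotheses guarantee that for a coalgebra $c:B\to FB$, the algebra $\mu(c)$ has carrier $M=\colim_n F^n B$ computed from the $\omega$-chain
\[ B \xrightarrow{c} FB \xrightarrow{Fc} F^2 B \xrightarrow{F^2 c} \cdots \]
Since $F$ is a dagger functor, $c^\dag:FB\to B$ is an $F$-algebra, and by the dual part of Proposition~\ref{prop:iterfix}, $\nu(c^\dag)$ has carrier $L=\lim_n F^n B$ computed from the cochain
\[ B \xleftarrow{c^\dag} FB \xleftarrow{F(c^\dag)} F^2 B \xleftarrow{F^2(c^\dag)} \cdots \]
The key observation is that $F(c^\dag)=(Fc)^\dag$, and inductively $F^n(c^\dag)=(F^n c)^\dag$, so this cochain is precisely the pointwise image under $\dag$ of the chain defining $\mu(c)$.

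The next step is to use that $\dag$, being an identity-on-objects isomorphism of categories, sends the colimiting cocone $(F^n B\to M)_n$ to a limiting cone $(M\to F^n B)_n$ over the daggered cochain, which is the very one computing $\nu(c^\dag)$. This identifies the carriers $M=L$. To identify the structure maps, recall from Proposition~\ref{prop:fixed} that $\mu(c):FM\to M$ and $\nu(c^\dag):L\to FL$ are isomorphisms, each determined uniquely by the universal property of the respective (co)limit together with how it interacts with the (co)cone maps. Daggering the equations that characterise the algebra structure of $\mu(c)$ yields exactly the equations characterising the coalgebra structure of $\nu(c^\dag)$, so by uniqueness $\mu(c)^\dag=\nu(c^\dag)$. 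The dual statement $\nu(a)^\dag\cong\mu(a^\dag)$ follows by the entirely parallel argument applied to the terminal-cochain presentation of $\nu(a)$.

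The main subtle point is not the equality of carriers, which is essentially immediate from the fact that the dagger swaps colimits and limits, but the verification that the daggered algebra structure on $M$ matches the canonical coalgebra structure produced by the limit universal property. I expect this to reduce to a clean uniqueness argument: $\mu(c)$ is characterised as the unique morphism $FM\to M$ fitting into the shifted colimit cocone $(F^{n+1}B\to FM)_n$, and daggering this characterisation produces exactly the uniqueness clause that pins down $\nu(c^\dag)$ as the unique morphism $M\to FM$ fitting into the shifted limit cone.
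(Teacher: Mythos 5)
Your proposal is correct and follows essentially the same route as the paper: both present $\mu(c)$ as the colimit of the chain $B \to FB \to F^2B \to \cdots$ and $\nu(c^\dag)$ as the limit of its pointwise dagger, and use that $\dag$, being an equivalence $\C \simeq \C^{\op}$, interchanges these (co)limits. Your extra care in checking that the daggered algebra structure map agrees with the canonical coalgebra structure on the limit is a detail the paper leaves implicit, but it does not constitute a different argument.
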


This theorem may be viewed as a special case of Theorem \ref{thm:steppreserve} but it is simpler to use the construction as a (co)limit.

\begin{proof}
For a coalgebra $X \xrightarrow{c} FX$ we have
\begin{align*}
    \nu(c^{\dag}) & \cong \lim(X \xleftarrow{c^{\dag}} FX \xleftarrow{Fc^{\dag}} F^2X \leftarrow \ldots ) \\
    &\cong \colim_{C^{\op}} (X \xleftarrow{c^{\dag}} FX \xleftarrow{Fc^{\dag}} F^2X \leftarrow \ldots ) \\
    &\cong \colim(X \xrightarrow{c} FX \xrightarrow{Fc} F^2X \to \ldots )^{\dag} \\
    &\cong \mu(c)^{\dag}
\end{align*}
The second isomorphism is because limits in $\C$ are colimits in $\C^{op}$ and the third isomorphism is because $\dag$ preserves colimits because it is an equivalence. A similar proof holds for the dual statement.
\end{proof}

\section{Conclusion}\label{sec:conclusions}

In this paper, we have demonstrated that the relative fixed points of a functor induce an adjunction between $F$-coalgebras and $F$-algebras. Based on this observation, we have studied various instances of relative fixed points of functors. In some of these instances, such as in the case of the Sierpinski carpet, the fixed points from these functors have previously been presented as initial algebras or final coalgebras. In other instances, the fixed points are novel, as is the case with the general description of relative fixed points of polynomial functors in Proposition~\ref{prop:polychar} and concrete instances of this description in subsequent examples. 

Relative fixpoints provide a fresh perspective on ca-morphisms. Previous work has mostly focused on cases where there is a unique ca-morphism, via the notions of recursive algebras and corecursive coalgebras \cite{capretta2009corecursive}. However, in \cite{hauhs2015scientific}, the authors argue that ca-morphisms also hold interest when they are not unique. Using examples in probability, dynamical systems, and game theory, the authors show how non-unique ca-morphisms often represent solutions to problems in these disciplines. This gives us hope that relative fixed points and the results we have proven about them may be useful in these applications as well. In particular, in future work we will develop the algorithm suggested in Example~\ref{ex0} and expand its capabilities to solve a wider range of problems.

Another direction of future work is to understand the connection between relatively terminal coalgebras and coequations. As discussed in Section \ref{sec:constructions}, $\nu (a)$ may be thought of as a `cofree solution of the coequation $a$'. As such, studying $\nu$ may yield new insights into this class of `(flatly) corecursive coequations', and the kind of properties that may be defined by such. 

\bibliography{cs.bib}

\end{document}